\documentclass{amsart}
\usepackage{amssymb}
\usepackage{latexsym}
\usepackage{graphicx}
\usepackage{subfigure}
\usepackage{tikz}
\usepackage[linktocpage=true]{hyperref}

\hypersetup{ colorlinks   = true, %Colours links instead of ugly boxes
urlcolor  = blue, %Colour for external hyperlinks
linkcolor    = black, %Colour of internal links
citecolor   = black %Colour of citations
}

%\date{\today}

\def\I{{\mathrm I}}
\def\II{{\mathrm{II}}}
\def\III{{\mathrm{III}}}

\def\L{{\Lambda}}
\def\t{{\theta}}
\def\l{{\lambda}}

\def\b{{\beta}}
\def\a{{\alpha}}
\def\e{{\varepsilon}}
\def\beq{\begin{equation}}
\def\eeq{\end{equation}}

\newcommand{\Z}{{\mathbb Z}}
\newcommand{\R}{{\mathbb R}}

\newcommand{\C}{{\mathbb C}}
\newcommand{\D}{{\mathbb D}}
\newcommand{\T}{{\mathbb T}}

\newcommand{\N}{{\mathbb N}}
\newcommand{\PP}{{\mathbb P}}

\newcommand{\CH}{{\mathcal H}}
\newcommand{\CI}{{\mathcal I}}

\newcommand{\CN}{{\mathcal N}}

\newcommand{\CZ}{{\mathcal Z}}
\newcommand{\CU}{{\mathcal U}}

\newtheorem{theorem}{Theorem}
\newtheorem{remark}{Remark}
\newtheorem{lemma}{Lemma}
\newtheorem{defi}{Definition}
\newtheorem{prop}{Proposition}

\newtheorem{corollary}{Corollary}
\newtheorem*{thma}{Theorem A}
\newtheorem*{thmb}{Theorem B}
\sloppy

%%%%%%%%%%%%%%%%%%%%%%%%%%%%%%%%%%%%%%%%%%%%%%%%%%%%%%%%%%%%%%%%%%%%%%%%%%%%%%%%%

\begin{document}

\title[Cantor spectrum for smooth potentials]{Cantor spectrum for a class of $C^2$ Quasiperiodic Schr\"odinger Operators}

\author[Y.\ Wang]{Yiqian Wang}

\address{Department of Mathematics, Nanjing University, Nanjing 210093, China}

\email{yiqianw@nju.edu.cn}

\thanks{Y.W. was supported by the National Natural Science Foundation of China grant No. 11271183.}

\author[Z.\ Zhang]{Zhenghe Zhang}

\address{Department of Mathematics, Rice University, Houston, TX~77005, USA}

\email{zzhang@rice.edu}

\thanks{This work was partially conducted during the period Z.Z. was supported by NSF grant DMS-1316534}

\begin{abstract} We show that for a class of $C^2$ quasiperiodic
potentials and for any \emph{Diophantine} frequency, the spectrum is Cantor. Our approach is of purely dynamical systems, which depends on a detailed analysis of asymptotic stable and unstable directions. We also apply it to general $\mathrm{SL}(2,\R)$ cocycles, and obtain that \emph{uniform hyperbolic} systems form a open and dense set in some one-parameter family.
\end{abstract}

\maketitle

\setcounter{tocdepth}{1}
\tableofcontents

\section{Introduction}\label{s.introduction}

Consider the family of Schr\"odinger operators $H_{\alpha,\l,v,x}$ on $\ell^2(\Z)\ni u=(u_n)_{n\in\Z}$:
\beq\label{operator}
(H_{\alpha,\lambda,v,x}u)_n=u_{n+1}+u_{n-1}+\lambda v(x+n\alpha)u_n.
\eeq
Here $v\in C^r(\R/\Z,\R),r\in\N\cup\{\infty,\omega\}$ is the potential, $\lambda\in\R$ coupling constant, $x\in\R/\Z$ phase, and $\alpha\in\R/\Z$ frequency. For simplicity, we may sometimes left $\a,\l, v$ in $H_{\alpha,\lambda,v,x}$ implicit.

Due to a theorem of Johnson \cite{johnson}, it's well-known that for irrational $\a$, the spectrum of the family of operators $H_{x}$ is phase-independent. So we may let $\Sigma_{\a,\l,v}$ denote the common spectrum in this case. Then it is well-known that
\beq\label{interval-contains-spectrum}
\Sigma_{\a,\l,v}\subset [-2+\l\inf v, 2+\l\sup v].
\eeq

Consider the eigenvalue equation $H_{x}u=Eu.$ Then there is an associated cocycle map which is denoted as
$A^{(E-\lambda v)}\in C^{r}(\mathbb R/\mathbb Z, \mathrm{SL}(2,\mathbb R))$, and is given by
\beq\label{schrodinger-cocycle-map}
A^{(E-\lambda v)}(x)=\begin{pmatrix}E-\lambda v(x)& -1\\1& 0\end{pmatrix}.
\eeq
Then $(\alpha, A^{(E-\lambda v)})$ defines a family of dynamical systems on $(\R/\Z)\times\R^2$, which is given by
\beq\label{schrodinger-cocycle}
(x,w)\mapsto (x+\alpha, A^{(E-\l v)}(x)w),
\eeq
and is called the Schr\"odinger cocycle. The $n$th iteration of dynamics is denoted by $(\alpha, A^{(E-\l v)})^n=(n\alpha, A^{(E-\l v)}_n)$. Thus,
$$
A^{(E-\l v)}_n(x)=\begin{cases}A^{(E-\l v)}(x+(n-1)\a)\cdots A^{(E-\l v)}(x), &n\ge 1;\\ Id, &n=0;\\ [A^{(E-\l v)}_{-n}(x+n\a)]^{-1}, &n\le -1.\end{cases}
$$

The relation between operator and cocycle is the following. $u\in\C^{\Z}$ is a solution of the equation $H_{\lambda,x}u=Eu$ if and only if
$$
A^{(E-\l v)}_n(x)\binom{u_{0}}{u_{-1}}=\binom{u_{n}}{u_{n-1}},\ n\in\Z.
$$
This says that $A^{(E-\l v)}_n$ generates the $n$-step transfer matrices for the operator (\ref{operator}).

\vskip 0.3cm
\subsection{Statement of main result and some review}\label{ss.main-results}
In this paper, from now on, we assume $v\in C^2(\mathbb R/\mathbb Z,\mathbb R)$ satisfy the following conditions:
\begin{itemize}
\item $\frac{dv}{dx}=0$ at exactly two points, one is minimal and the other maximal, which are denoted by $z_1$ and $z_2$.
\item these two extremals are non-degenerate. In other words, $\frac{d^2v}{dx^2}(z_j)\neq0$ for $j=1,2$.
\end{itemize}

Fix two positive constants $\tau, \gamma$. We say $\a$ satisfying a \emph{Diophantine} condition $DC_{\tau,\gamma}$ if
$$
|\a-\frac{p}{q}|\ge \frac{\gamma}{|q|^{\tau}} \mbox{ for all } p, q\in \Z \mbox{ with } q\neq 0.
$$
It is a standard result that for any $\tau>2$,
$$
DC_\tau:=\bigcup_{\gamma>0}DC_{\tau,\gamma}
$$
is of full Lebesgue measure. From now on, we fix an arbitrary $\tau>2$ and consider any fixed choice of $\a\in DC_\tau$. Then, we would like to show the following result.
\begin{thma}[Main Theorem]\label{t.main}
  Let $\a$ and $v$ be as above. Consider the Schr\"odinger operators with potential $v$ and coupling constant $\l$. Then there exists a $\l_0=\l_0(\a,v)>0$ such that
\begin{center}
$\Sigma_{\a,\l,v}$ is a Cantor set
\end{center}
  for all $\l>\l_0$.
\end{thma}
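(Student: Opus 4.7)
My plan is to exploit the standard equivalence
$$E\notin\Sigma_{\a,\l,v}\ \Longleftrightarrow\ (\a,A^{(E-\l v)})\ \text{is uniformly hyperbolic},$$
together with the fact that the set $\CU\CH\subset\R$ of uniformly hyperbolic energies is always open. Since $\Sigma_{\a,\l,v}$ is closed, bounded, and (by Kotani theory / gap labelling, using $L(E)>0$ at large coupling) has no isolated points, the Cantor conclusion reduces to showing that $\CU\CH$ is \emph{dense} in $\R$. Concretely, for every $E_0\in\Sigma_{\a,\l,v}$ and every $\e>0$ I have to find some $E\in(E_0-\e,E_0+\e)$ at which the cocycle $(\a,A^{(E-\l v)})$ is uniformly hyperbolic.

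\textbf{Asymptotic directions and tangency perturbation.} First I would establish, via a Herman/Sorets--Spencer type large-coupling estimate adapted to $C^2$ potentials, that for $\l\geq\l_0(\a,v)$ one has $L(E)\geq \tfrac{1}{2}\log\l$ uniformly for $E$ in the interval (\ref{interval-contains-spectrum}). Then every $E\in\Sigma_{\a,\l,v}$ is non-uniformly hyperbolic, and Oseledets' theorem yields measurable stable/unstable direction fields $s(\cdot;E),u(\cdot;E):\R/\Z\to\mathbb{RP}^1$ satisfying
$$A^{(E-\l v)}(x)\cdot s(x;E)=s(x+\a;E),\quad A^{(E-\l v)}(x)\cdot u(x;E)=u(x+\a;E).$$
Uniform hyperbolicity at $E$ is exactly the statement that $s(\cdot;E)$ and $u(\cdot;E)$ extend continuously to $\R/\Z$ and are nowhere equal. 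The core of the argument would then be to analyze the ``angle'' $\t(x;E):=\angle(s(x;E),u(x;E))$ and prove a quantitative transversality $|\pa_E\t(x_\star;E_0)|>0$ at any tangency point $\t(x_\star;E_0)=0$, so that an arbitrarily small perturbation $E\to E_0+t$ separates $s$ and $u$ globally, forces uniform hyperbolicity, and opens a spectral gap arbitrarily close to $E_0$.

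\textbf{Role of the two critical points and Diophantine condition.} The hypothesis that $v$ has exactly two non-degenerate critical points $z_1,z_2$ is what makes this quantitative control feasible. Away from $z_1,z_2$ the cocycle is strongly hyperbolic (the off-diagonal entry $E-\l v$ is of order $\l$), so tangencies can only be generated from near-elliptic returns when some $x+n\a$ lands close to one of the $z_j$ with $E\approx\l v(z_j)$. The Diophantine condition $DC_{\tau,\gamma}$ guarantees that the two orbits $\{z_j+n\a:n\in\Z\}$ are well separated on every relevant scale, so these critical returns can be processed one at a time by a normal-form step, using $v''(z_j)\neq 0$ for quadratic control of $E-\l v$ near each extremum. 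These local estimates would then be propagated through a multiscale induction in a scale parameter (such as $\log\l$), producing admissible candidate stable/unstable fields at each scale and trapping tangencies at discrete energies where transversality in $E$ can be exhibited with uniform lower bounds.

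\textbf{Main obstacle.} The hardest step is running this multiscale/KAM-type induction in the merely $C^2$ category. In the analytic setting, large-deviation estimates, the avalanche principle, and subharmonicity---all the tools behind the works of Goldstein--Schlag, Bourgain--Goldstein, Avila--Jitomirskaya, and Eliasson---substitute for a direct dynamical analysis; none of them are available here. As the abstract signals, the point of the paper is precisely to replace that missing analytic machinery by a careful study of asymptotic stable and unstable directions. Establishing (i) the existence of $s(\cdot;E),u(\cdot;E)$ off a controlled exceptional set, (ii) a scale-uniform lower bound on $|\pa_E\t|$ at tangencies, and (iii) summable errors across scales, is where essentially all the technical work must go; once those are in hand, the density of $\CU\CH$ and hence the Cantor structure of $\Sigma_{\a,\l,v}$ follow immediately.
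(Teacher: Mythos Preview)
Your overall architecture matches the paper: reduce Theorem~A to density of $\CU\CH$ via Johnson's theorem, work with (asymptotic) stable and unstable directions, and exploit their variation in the energy parameter. The paper also runs a multiscale induction in the $C^2$ category exactly along the lines you anticipate in your last paragraph.

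However, there is a genuine conceptual gap in your second paragraph. You claim that transversality $|\partial_E\theta(x_\star;E_0)|>0$ at a tangency point implies that an arbitrarily small perturbation in $E$ ``separates $s$ and $u$ globally.'' This does not follow. Transversality in $E$ at a single tangency only tells you that the tangency \emph{moves} in $x$ as $E$ varies; it does not make it disappear. In fact, the potential has two non-degenerate extrema, and at each scale of the induction the function $g_i(\cdot,t)=s_{r_i}-u_{r_i}$ generically has \emph{two} zeros $c_{i,1}(t),c_{i,2}(t)$ (the ``critical points''), both of which persist over open intervals of $t$. So for a typical $E$ in the spectrum there is no nearby $E$ with zero tangencies just from local transversality.

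The mechanism the paper actually uses is different and is the key idea you are missing: one tracks the \emph{relative position} $\rho_i(t)=c_{i,1}(t)-c_{i,2}(t)$ of the two critical points and shows $|d\rho_i/dt|>c$ uniformly (this is where the lower bound on $\partial g_i/\partial t$ and the opposite signs of $\partial g_i/\partial x$ at the two critical points are used). Hence over any parameter interval $J$ the image $\rho_i(J)$ has length $\ge c|J|$, and by density of $\{k\alpha\}$ one finds, at some step $i$ and some $t\in J$, that $c_{i,1}(t)-c_{i,2}(t)$ is within $C\lambda^{-r_{i-1}/2}$ of some $k\alpha$ with $r_{i-1}\le|k|<q_{N+i-1}$. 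This is \emph{strong resonance}: the orbit of one critical point hits the other. At that moment the type~$\III$ bifurcation of Lemma~\ref{l.s-deri-resonance} kicks in and $g_{i+1}$ becomes bounded away from zero on all of $I_i(t)$, so both tangencies are destroyed simultaneously and uniform hyperbolicity follows. Your Diophantine heuristic is also slightly inverted: the Diophantine condition is not used to keep the two critical orbits \emph{apart} at every scale, but to control return times polynomially so that the induction converges; the whole point is that resonance eventually \emph{does} occur for some $t$ in any given interval.

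A smaller technical point: working directly with the Oseledets directions $s(\cdot;E),u(\cdot;E)$ is not feasible here, since they are only defined almost everywhere and are merely measurable at spectral energies. The paper instead uses the finite-time most-contracted directions $s_n,u_n$, which are $C^2$ in $(x,t)$ and allow the inductive classification of $g_i$ into types $\I,\II,\III$.
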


The geometric structure of spectrum has been one of the central topics for the Schr\"odinger operators (\ref{operator}) for a very long time. For rational $\alpha=\frac pq$, it is well known that spectrum consists of $q$ non-degenerate intervals which are called spectral bands. For irrational $\alpha$, the spectrum is widely expected to be Cantor. We review some of the results in the following.

The most intensively studied model is probably the Almost Mathieu Operator, where $v(x)=\cos2\pi x$. Then the famous `The Ten Martini Problem' conjectured that for this operator, the spectrum must be Cantor for all non-zero couplings and for all irrational frequency. This conjecture has been completely settled by Avila-Jitomirskaya \cite{AJ}. We refer the authors to \cite{AJ} for further references regarding its long history and numerous earlier partial results.

For general real analytic potentials, Eliasson \cite{eliasson} proved that for a fixed Diophantine frequency and for small couplings, the spectrum is Cantor for generic potentials under suitable analytic topology. On the other hand, Goldstein-Schlag \cite{goldsteinschlag} proved that for any real analytic potential, in the region of positive Lyapunov exponents, spectrum is Cantor for almost every frequency.

For $C^0$ potentials, Avila-Bochi-Damanik \cite{avilabochidamanik} proved that for any irrational frequency, and for $C^0$ generic potentials, the spectrum is a Cantor set.

If one lower the regularity even more, then Damanik-Lenz \cite{DL1, DL2} obtained for a large class of step functions, the spectrum is of zero Lebesgue measure which clearly implies Cantor Spectrum. These type of potentials are in particular related to Sturmian subshift, or even more special, the Fibonacci subshift, which has also been intensively studied. See \cite{DL1, DL2} for further references and more detailed description.

For non-Cantor examples, Avila-Damanik-Zhang \cite{aviladamanikzhang} showed that for a dense subset in the joint space of frequency and $C^0$ potentials, the corresponding spectrum contain non-degenerate interval while frequencies are irrational. Those are the first examples of this kind for non-periodic quasiperiodic potentials.

For $C^r$ potentials with $1\le r\le \infty$, the model in Theorem~A was in fact first studied by Sinai~\cite{sinai}. To the best of our knowledge, this model so far is the only $C^r$, $1\le r\le \infty$, quasiperiodic potentials that one is able to show Cantor spectrum. The smooth case is considered to be difficult. Because in this case, one cannot get around the small divisor type of problems by tools or techniques like subharmonicity (in $C^\omega$ case) or Kotani Theory (in the $C^0$ or step functions cases). Arithmetic properties, like \emph{Diophantine} or \emph{Brjuno} conditions, and complicated induction are always necessary for whatever type of problems in this case.

We would like to emphasize that though the flavor of our approach is of purely dynamical systems, we are benefited from \cite{sinai} for the idea of `resonance leads to spectral gaps'. This idea is also used in \cite{goldsteinschlag}.

\subsection{Idea of proof of the Main Theorem}\label{ss.idea-of-proof}

We first define \emph{uniformly hyperbolic} systems ($\CU\CH$).

Consider for any $\beta\in\R/\Z$ and any cocycle map $B\in C^0(\R/\Z,\mathrm{SL}(2,\R))$ the dynamical system $(\beta,B):\R/\Z\times\R^2\rightarrow\R/\Z\times\R^2$ as we defined in (\ref{schrodinger-cocycle}). For any $M\in SL(2,\R)$, let $\bar M:\R/(\pi\Z)=\R\PP^1\rightarrow\R\PP^1$ be the induced map on the real projective space. Let $\bar B\in C^0(\R/\Z,\mathrm{PSL}(2,\R))$ be induced map such that $\bar B(x)=\overline{B(x)}$. Then we have the following definition.

\begin{defi}
We say $(\beta,B)$ is \emph{uniformly hyperbolic} ($\CU\CH$) if there exists two funcitons $\bar s, \bar u:\R/\Z\rightarrow\R\PP^1$ such that the following holds:
\begin{itemize}
\item $\bar B(x)\cdot \bar s(x)=\bar s(x+\beta)$, $\bar B(x)\cdot \bar u(x)=\bar u(x+\beta)$.
\item There exist $c>0$ and $\rho>1$ such that for any unit vector $w^s\in\bar s(x)$ and $w^u\in\bar u(x)$, it holds that
 $$
 \|B_n(x)w^s\|,\ \|B_{-n}(x)w^u\|<c\rho^{-n}
 $$
 for all $n\ge 1$ and for all $x\in \R/\Z$. Here we also consider $\bar s(x)$ and $\bar u(x)$ as one dimensional subspace of $\R^2$.
\end{itemize}
\end{defi}

Here $\bar s$ is the so-call stable direction and $\bar u$ the unstable direction. Note $\CU\CH$ is an open condition in $C^0$ topology. Throughout this paper, $(\b,B)\in\CU\CH$ will be equivalent to the statement `$(\b,B)$ is \emph{uniformly hyperbolic}', and $(\b,B)\notin\CU
\CH$ to `$(\t,B)$ is not \emph{uniformly hyperbolic}'. Then the following basic relation between spectral theory of Schr\"odinger operators and the dynamics of Schr\"odinger cocycles is due to Johnson \cite{johnson}.

\begin{theorem}\label{t.basic}
For irrational $\a$, it holds that
\beq\label{t.johnson}
\Sigma_{\a,\l,v}=\{E:\ (\a,A^{(E-\l v)})\notin\CU\CH\}.
\eeq
\end{theorem}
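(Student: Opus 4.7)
The plan is to prove the equivalence $E \notin \Sigma_{\a,\l,v} \Leftrightarrow (\a, A^{(E-\l v)}) \in \CU\CH$ by handling the two implications separately.

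For $(\Leftarrow)$, suppose $(\a, A^{(E-\l v)}) \in \CU\CH$ with stable and unstable directions $\bar s, \bar u$. I would pick unit vectors $w^s(x) \in \bar s(x)$ and $w^u(x) \in \bar u(x)$, use them as initial data $\binom{u_0}{u_{-1}}$ for the eigenvalue equation $H_x u = E u$, and iterate via $A_n^{(E-\l v)}$ to produce solutions $u^s, u^u$ decaying exponentially as $n \to +\infty$ and $n \to -\infty$ respectively, with rates controlled by the $\CU\CH$ constants $c, \rho$. Since $\bar s(x) \neq \bar u(x)$ everywhere (a common vector would be forced to contract in both time directions), the Wronskian $W_x = u^s_0 u^u_{-1} - u^s_{-1} u^u_0$ is nonzero, and it is constant in $n$ because the transfer matrices lie in $\mathrm{SL}(2,\R)$. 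The standard explicit formula
\[
G_x(n,m) = \frac{1}{W_x}\, u^u_{\min(n,m)}(x)\, u^s_{\max(n,m)}(x)
\]
then defines the bounded inverse of $H_x - E$ on $\ell^2(\Z)$, with operator norm uniform in $x$ by compactness of $\R/\Z$. Hence $E$ lies in the resolvent set for every $x$, so $E \notin \Sigma_{\a,\l,v}$.

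For $(\Rightarrow)$, assume $E \notin \Sigma_{\a,\l,v}$, so $(H_x - E)^{-1}$ is bounded on $\ell^2(\Z)$ for every $x$. A Combes--Thomas argument gives exponential off-diagonal decay of its matrix entries at a rate depending only on $\mathrm{dist}(E, \Sigma_{\a,\l,v})$. Applying $(H_x - E)^{-1}$ to $\delta_0$ produces an $\ell^2$-function which satisfies $H_x u = E u$ at every $n \neq 0$; by second-order recursion, its restriction to $n \geq 1$ extends uniquely to a global solution $u^s$ decaying at $+\infty$, and similarly its restriction to $n \leq -1$ extends to $u^u$ decaying at $-\infty$. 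The initial data $\binom{u^s_0}{u^s_{-1}}$ and $\binom{u^u_0}{u^u_{-1}}$ define lines $\bar s(x), \bar u(x) \subset \R\PP^1$. Cocycle invariance $\bar B(x) \cdot \bar s(x) = \bar s(x+\a)$ and the analogue for $\bar u$ are automatic from the transfer-matrix formalism; continuity of $\bar s, \bar u$ in $x$ follows from continuity of the resolvent in $x$; uniformity of the exponential contraction rate over $\R/\Z$ comes from compactness of $\R/\Z$ together with minimality of the rotation $x \mapsto x + \a$.

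The main obstacle is the $(\Rightarrow)$ direction, specifically verifying linear independence and uniformity. Linear independence of $\bar s(x)$ and $\bar u(x)$ must be established at every $x$: if they coincided, splicing the two half-line decaying solutions would produce an $\ell^2(\Z)$ eigenfunction of $H_x$ at $E$, contradicting $E \notin \Sigma_{\a,\l,v}$ via the identification of point spectrum with $\ell^2$-eigenfunctions. Promoting the pointwise Combes--Thomas rate to a genuinely uniform exponential dichotomy of the form $c\rho^{-n}$ required in the definition of $\CU\CH$ is the step where minimality of $x \mapsto x + \a$ really enters: since every orbit is dense, an estimate along one orbit transfers by continuity to all phases, so the irrationality of $\a$ is used substantively here even though the conclusion is phrased purely in dynamical terms.
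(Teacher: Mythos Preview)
The paper does not supply its own proof of this theorem: it is quoted as a known result due to Johnson~\cite{johnson}, and the reader is pointed to \cite{zhang2} for a more recent proof. So there is no ``paper's own proof'' to compare against; your proposal stands on its own.

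Your argument is the standard direct route and is essentially correct. A couple of remarks. In the $(\Rightarrow)$ direction you invoke minimality of the rotation to upgrade pointwise decay to a uniform rate, but in fact the uniformity is already available more cheaply: since the paper has stipulated that the spectrum $\Sigma_{\a,\l,v}$ is phase-independent, the distance $\mathrm{dist}(E,\sigma(H_x))$ is the same for every $x$, and for self-adjoint operators this distance equals $\|(H_x-E)^{-1}\|^{-1}$. The Combes--Thomas constants therefore come out uniform in $x$ directly, without appealing to orbit density. Minimality is of course what underlies phase-independence in the first place, but once that is granted you do not need to use it again. Also, in your construction of $u^s$ from $(H_x-E)^{-1}\delta_0$, be explicit that the restriction of this $\ell^2$ vector to $n\ge 1$ satisfies the recursion for $n\ge 2$, so the extension to a full solution is determined by two consecutive values; the decay of the solution at $+\infty$ then follows from the Combes--Thomas bound on the Green's function together with the fact that $u^s_n$ and $u^s_{n-1}$ are both controlled by it.
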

Clearly, Theorem~\ref{t.basic} reduce the proof of Theorem~A to the proof of the following theorem.
\begin{thmb}\label{t.main1}
Let $\a$, $v$ and $\l$ be as in Theorem~A. Let $S\subset\R$ be any nondegenerate interval. Then there exists some $E\in S$ such that $(\a, A^{(E-\l v)})\in \CU\CH$.
\end{thmb}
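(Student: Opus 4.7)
The plan is to show that the set $\{E : (\a, A^{(E-\l v)}) \in \CU\CH\}$ is dense in $\R$. Since $\CU\CH$ is an open condition in the $C^0$ topology, it suffices to exhibit, inside any given nondegenerate interval $S$, a single energy for which the cocycle is uniformly hyperbolic. The first input is a strong-coupling estimate in the spirit of Sinai: for $\l>\l_0$ large enough, the cocycle $(\a,A^{(E-\l v)})$ has positive Lyapunov exponent throughout the relevant energy window, uniformly bounded below by roughly $\tfrac12\log\l$ away from the critical values $\l v(z_1), \l v(z_2)$. For $E$ such that $v^{-1}(E/\l)$ is empty in $\R/\Z$, a direct cone-field argument already gives $\CU\CH$; the real work concerns energies where $E-\l v$ changes sign on $\R/\Z$, which is precisely the interesting part of $\Sigma_{\a,\l,v}$.

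Next, I would carry out a multi-scale analysis of the transfer matrices $A^{(E-\l v)}_n(x)$ along a sequence of scales $n_k$ adapted to the continued-fraction denominators of $\a$. At scale zero, $A^{(E-\l v)}(x)$ is hyperbolic outside a critical set $\CC_0(E)\subset\R/\Z$ consisting of those $x$ with $|E-\l v(x)|<\d$. The non-degeneracy of the extrema $z_1, z_2$ makes $\CC_0(E)$ a union of at most two intervals of length $O(\sqrt{\d/\l})$. Inductively, assume one has built approximate stable and unstable directions $\bar s_k(\cdot,E), \bar u_k(\cdot,E)$, defined and $C^2$ off a critical set $\CC_k(E)$, which are contracted and expanded at geometric rates $\rho_k$ over scale $n_k$. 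The step from $k$ to $k+1$ tracks the $\a$-orbit of $\CC_k(E)$ against itself: a \emph{resonance} at scale $n_{k+1}$ occurs when two iterates land in $\CC_k(E)$ almost simultaneously, and in that case a projective rotation of order one is forced in the cocycle. This is the `resonance leads to spectral gaps' mechanism inherited from \cite{sinai, goldsteinschlag}.

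The decisive step is to show that such a resonance, when it appears at scale $n_{k+1}$ for some $E\in S$, actually produces an open interval of energies where $\bar s_k$ and $\bar u_k$ can be matched to globally transverse continuous invariant directions. Using that the projective rotation induced by the resonance depends monotonically on $E$, with rate bounded below by the non-degeneracy constants $\frac{d^2v}{dx^2}(z_j)$, an intermediate-value argument singles out $E_*\in S$ at which the approximate stable and unstable directions align continuously along the orbit in the unique configuration compatible with uniform hyperbolicity. Letting $k\to\infty$, the sequences $\bar s_k(\cdot,E_*), \bar u_k(\cdot,E_*)$ converge uniformly to continuous invariant sections $\bar s, \bar u$ satisfying the contraction/expansion bounds of the definition of $\CU\CH$, which gives $(\a,A^{(E_*-\l v)})\in\CU\CH$ and thus proves Theorem B.

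The main obstacle is closing this induction in the $C^2$ category. Unlike the real-analytic case, there is no subharmonicity or large-deviation tool available; the only quantitative input is the non-degeneracy of $v$ at $z_1, z_2$, which gives the sharp $O(\sqrt{\cdot})$ control of the measure and geometry of each $\CC_k(E)$. Matching this geometric decay against the polynomial resonance spacing $|q|^{-\tau}$ forced by $DC_{\tau,\gamma}$ is delicate because $C^2$-errors in the approximate invariant directions compound multiplicatively across scales, and because their Lipschitz dependence on $E$ must be propagated uniformly for the transversality argument to apply to an arbitrary nondegenerate interval $S$. I expect the bulk of the technical work of the paper to be devoted to making this propagation precise.
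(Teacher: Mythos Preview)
Your outline captures the paper's overall architecture: multi-scale induction tracking critical points (zeros of the angle function $g_i=s_{r_i^+}-u_{r_i^-}$), resonances between orbits of critical points, and monotone dependence on the energy parameter. The paper indeed builds exactly this machinery, imported from \cite{Wang-Zhang}, and the ``resonance leads to gaps'' idea is the heart of the argument.

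Where your proposal diverges from the paper is the endgame. You propose to select $E_*$ by an intermediate-value argument, then pass to the limit $k\to\infty$ and obtain continuous invariant sections $\bar s,\bar u$. The paper does \emph{not} take any limit. Instead it shows that for some \emph{finite} step $n$ and some $t_0$ in the given interval, the angle gap satisfies $\min_{x\in I_{n-1}}|g_n(x,t_0)|>\l^{-\frac{1}{10}r_{n-1}}$ (Lemma~\ref{l.large-gap}); once this holds, uniform exponential growth of $\|A_M\|$ follows directly by concatenating return-time blocks via Proposition~\ref{p.large-norm}, and $\CU\CH$ is obtained through the equivalent criterion of Proposition~\ref{p.ueg}. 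Your limiting scheme would in principle work but requires nesting and convergence arguments that the finite-step route sidesteps entirely.

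The paper is also more specific than ``projective rotation monotone in $E$'' about which quantity drives the argument. It introduces $\rho_i(t)$, equal either to $c_{i,1}(t)-c_{i,2}(t)$ (the signed distance between the two critical points) or to $g_i(c_{i,1}(t),t)$ when the critical points have already merged, and proves $\bigl|\frac{d\rho_i}{dt}\bigr|>c$ uniformly (Corollary~\ref{c.key}); this is derived from Lemma~\ref{l.key}, whose key ingredient is $\frac{\partial g_i}{\partial t}>c$ together with the fact that the $x$-derivatives of $g_i$ at the two critical points have \emph{opposite signs}. It is this opposite-sign structure, propagated through the type $\I/\II/\III$ classification of Definition~\ref{d.type-of-functions}, that guarantees the two critical points move toward (or away from) each other as $t$ varies---not the second derivative of $v$ directly. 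The bifurcation picture in Lemma~\ref{l.s-deri-resonance} then shows that once the critical points are forced close enough (strong resonance), they annihilate and $g_{i+1}$ is bounded away from zero. Your intuition is right that the non-degeneracy of $v$ seeds the initial step, but after that the induction is self-sustaining through this sign structure rather than through repeated appeals to $\frac{d^2v}{dx^2}(z_j)$.
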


The following equivalent condition for $\CU\CH$ is also well-known, see, e.g. \cite{yoccoz}.
\begin{prop}\label{p.ueg}
Let the dynamical system $(\beta, B)$ be as above. Then $(\b, B)\in\CU\CH$ if and only if the following \emph{Uniform Exponential Growth} condition holds. There exist $c>0$ and $\rho>1$ such that
\beq\label{ueg}
\|B_n(x)\|>c\rho^{|n|}
\eeq
for all $n\in\Z$ and for all $x\in\R/\Z$.
\end{prop}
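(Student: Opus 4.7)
My plan is to handle the two implications separately.

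\textbf{The easy direction ($\CU\CH\Rightarrow$ UEG).} Given the invariant directions $\bar s,\bar u$, I fix $x\in\R/\Z$ and a unit vector $w^u\in\bar u(x)$. By invariance, $B_n(x)w^u\in\bar u(x+n\beta)$ for every $n\in\Z$; setting $a_n:=\|B_n(x)w^u\|$, the vector $B_n(x)w^u/a_n$ is a unit vector in $\bar u(x+n\beta)$, so applying $B_{-n}(x+n\beta)$ returns $w^u/a_n$. The backward contraction estimate for $\bar u$ gives $1/a_n<c\rho^{-n}$ for $n\ge 1$, hence $\|B_n(x)\|\ge a_n>c^{-1}\rho^n$. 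For $n\le -1$, the symmetric argument with $w^s\in\bar s(x)$ and forward contraction on $\bar s$ yields the corresponding bound.

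\textbf{The hard direction (UEG $\Rightarrow\CU\CH$).} Here the plan is to construct $\bar s,\bar u$ as limits of singular directions. For each $n\ge 1$ with $\|B_n(x)\|>1$, the $\mathrm{SL}(2,\R)$ matrix $B_n(x)$ has a unique most-contracted direction $\bar s_n(x)\in\R\PP^1$, on which a unit vector $w$ satisfies $\|B_n(x)w\|=\|B_n(x)\|^{-1}$. I would define $\bar u_n(x)$ analogously from $B_{-n}(x)$. The substantive step is a uniform exponential Cauchy bound: comparing the singular-value decompositions of the two factors in $B_{n+m}(x)=B_m(x+n\beta)\,B_n(x)$ yields
\[
d_{\R\PP^1}\!\bigl(\bar s_n(x),\bar s_{n+m}(x)\bigr)\;\le\;C\,\|B_n(x)\|^{-2}\;\le\;C'\rho^{-2n},
\]
so $\bar s_n(x)$ converges uniformly to a continuous $\bar s(x)$, and similarly $\bar u_n(x)\to\bar u(x)$. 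Invariance $\bar B(x)\cdot\bar s(x)=\bar s(x+\beta)$ follows by applying the same SVD comparison to $B_{n+1}(x)=B_n(x+\beta)B(x)$ and passing to the limit. Once constructed, $\bar s(x)$ and $\bar u(x)$ must be transverse at every $x$ — otherwise UEG would fail along a coincident direction in either forward or backward time — so continuity plus compactness of $\R/\Z$ give a uniform lower bound on the angle between them. Combining UEG with $\det B_n(x)=1$ then forces vectors in $\bar s(x)$ to contract at the reciprocal rate of the expansion of vectors in $\bar u(x)$, completing the verification of $\CU\CH$.

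\textbf{Main obstacle.} The delicate point is the uniform exponential Cauchy bound displayed above. At finite $n,m$, the SVD comparison requires a lower bound on the angle between the most expanded direction of $B_n(x)$ and the most contracted direction of $B_m(x+n\beta)$; although this transversality is automatic in the limit once $\bar s,\bar u$ are known to be transverse, bootstrapping it from UEG alone — uniformly in $x$, $n$, and $m$ — is the main technical task. Once the uniform rate is in hand, all remaining steps reduce to standard limiting and continuity arguments.
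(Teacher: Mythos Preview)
The paper does not give its own proof of this proposition: it quotes the result as well known, citing \cite{yoccoz} and \cite{zhang2}, and remarks that the argument in \cite{zhang2} is ``close in spirit'' to the techniques of the paper. Your outline is indeed along the lines of that standard approach (construct $\bar s,\bar u$ as limits of most-contracted directions of $B_{\pm n}$), so at the level of strategy there is nothing to contrast.

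Where your write-up goes wrong is in locating the difficulty. The step you flag as the ``main obstacle'' is not one: the displayed Cauchy bound follows immediately from the case $m=1$ together with a telescoping sum. From $B_{n+1}(x)=B(x+n\beta)B_n(x)$ and the singular value decomposition one gets, with no angle hypothesis whatsoever,
\[
d_{\R\PP^1}\bigl(\bar s_n(x),\bar s_{n+1}(x)\bigr)\le \frac{\|B(x+n\beta)\|}{\|B_n(x)\|\,\|B_{n+1}(x)\|}\le C\rho^{-2n},
\]
since the single factor $\|B(x+n\beta)\|$ is uniformly bounded. Summing the geometric series gives your bound for all $m$, uniformly in $x$.

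The genuine gaps are in the two steps you pass over quickly. First, your transversality argument (``otherwise UEG would fail along a coincident direction'') is not valid as stated: UEG is a hypothesis on the operator norms $\|B_n(x)\|$, and the existence of a single vector $v\in\bar s(x_0)=\bar u(x_0)$ whose forward and backward orbits fail to grow does not contradict $\|B_n(x_0)\|>c\rho^{|n|}$. One actually needs to feed UEG for the concatenated block $B_{n+m}(x_0-m\beta)=B_n(x_0)\,B_{-m}(x_0)^{-1}$ back into a norm-versus-angle identity to force $\angle(\bar s_n(x_0),\bar u_m(x_0))$ away from zero. Second, your final sentence (``UEG with $\det=1$ forces contraction on $\bar s$'') presupposes that one invariant direction is already known to expand; even after transversality is established, getting this from UEG takes an extra argument (diagonalise via a continuous conjugacy and use connectedness of $\R/\Z$ to rule out a switch of the expanding/contracting roles). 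These, not the Cauchy estimate, are the places where real work is required.
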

See also \cite{zhang2} for more recent proofs of the phase-independence of the spectrum, Theorem~\ref{t.johnson}, and Proposition~\ref{p.ueg}. In particular, the proof of the Proposition~\ref{p.ueg} in \cite{zhang2} is also close in spirit of the proof of the Main Theorem in this paper.

Basically, we can define a pair of asymptotic stable and unstable directions, $s_n$ and $u_n$, which are the most contraction directions of $A^{(E-\l v)}_n(x)$ and $A^{(E-\l v)}_{-n}(x)$. It's not difficult to see that $(\a,A^{(E-\l v)})\in\CU\CH$ if and only if $|s_n(x)-u_n(x)|$ is bounded away from zero with a distance of order $\|A^{(E-\l v)}_n(x)\|^{-1}$ for some $n\ge 1$ (See, e.g. \cite[Theorem 1]{zhang2}). The reason we call them asymptotic stable and unstable directions is just that in case of positive Lyapunov exponents, they converge to the real stable and unstable directions as $n$ goes to infinity. In particular, the convergence is of uniform fashion in case of $\CU\CH$.

It is clear that for each $E$, the dangerous points are exactly those $x$ such that
$$
s_n(x,E)-u_n(x,E)=0.
$$
These points are called $n$-step `\emph{critical points}'. Since we necessarily starts with some $E$ that the $1$-step \emph{critical points} exist, one need some mechanism such that \emph{critical points} disappear at some step $n$. It turns out `\emph{strong resonance} between orbits of different $n$-step critical points' is one of such mechanisms. Here \emph{strong resonance} means the orbit of one $n$-step critical point getting sufficiently close to another $n$-step critical points within some time that is not too large.

Then, roughly speaking, what we do is to develop some induction scheme with following properties. We find a times sequence $\{r_n\}_{n\in\N}\subset\Z_+$, $\lim_{n\rightarrow\infty}r_n=\infty$, such that:

\begin{itemize}
\item for any interval $J_0$ of energies $E$, as the induction moving forward, \emph{strong resonance} occurs between orbits of different zeros of
$$
s_{r_n}(\cdot,E)-u_{r_n}(\cdot,E)
$$
at some step $n$ for some $E\in J_0$;
\item strong resonance leads to the separation of asymptotic stable and unstable directions with distance of order $\|A^{(E-\l v)}_{r_n}(x)\|^{-1},\ x\in I_n$.
\end{itemize}

It turns out that the time sequence $r_n$ is nothing other than some `\emph{return times}' of some sequence of intervals, $I_n(E)$, of base space with $\lim_{n\rightarrow\infty}|I_n(E)|=0$. And to obtain two properties above, we need to control the geometric properties of $s_n$ and $u_n$ both as functions of $x$ and of $E$. The main difficulty comes from `\emph{resonance}', the occurrence of which leads to drastic change of  $s_n(x,E)$ and $u_n(x,E)$.

To deal with this issue, we need to define some suitable class of \emph{`nice'} $C^2$ functions so that for each $E$,
$$
s_{r_n}(\cdot,E)-u_{r_n}(\cdot,E):I_n(E)\rightarrow\R\PP^1
$$
falls into this class at each induction step, see Definition \ref{d.type-of-functions} in Section 2 for details.

The idea of locating the $n$-step \emph{critical points} and of estimating the geometric properties of $s_n$ and $u_n$ go back to Young \cite{young}, which is close in spirit to the Benedicks-Carleson \cite{benedickscarleson} type of techniques for Hen\'on maps, and has been successfully applied to Schr\"odinger cocycle by Zhang \cite{zhang}, Wang-You \cite{youwang1, youwang2}, and Wang-Zhang \cite{Wang-Zhang}.

In fact, the proof of Theorem~B in this paper is based on the induction scheme developed in \cite{Wang-Zhang}, where the authors have already obtained uniform positivity and continuity of the Lyapunov exponents for the same model as in Theorem~A.

For the energy parameter $E$, in this paper, we need to get the lower bound of
$$
\left|\frac{\partial(s_{r_n}-u_{r_n})}{\partial E}(x,E)\right|
$$
in each induction step so that the distance between zeros of $s_n(\cdot,E)-u_n(\cdot,E)$ varies with large enough rate in $E$. It turns out that, roughly speaking, one needs the uniform exponential growth of
$$
\|A^{(E-\l v)}_{\pm r_n}(x)\|,\ x\in I_n(E),\ E\in J_0
$$
to get all the necessary estimates, which is essentially nothing other than the uniform positivity of the corresponding Lyapunov exponents. Fortunately, almost all the main estimates mentioned above have been done in \cite{Wang-Zhang}.

\subsection{Generalization and further comments}\label{ss.generalization}

The main advantage of our approach is that it's of purely dynamical systems, which is in particular not restricted to the Schr\"odinger cocycle category. For instance, we may consider a function $\psi\in C^2(\R/\Z, \R)$ satisfying all the properties of $v$ in Theorem~A. In addition, we also assume that $\sup\psi-\inf\psi<\pi$. Then consider the one-parameter family of cocycle map $B^{(\t,\l)}\in C^2(\R/\Z,\mathrm{SL}(2,\R))$ such that
$$
B^{(\t,\l)}(x)=\begin{pmatrix}\l &0\\0&\l^{-1}\end{pmatrix}\cdot R_{\psi(x)}\cdot R_\t,\ \t\in\R,
$$
where
$$
R_\phi=\begin{pmatrix}\cos\phi &-\sin\phi\\ \sin\phi &\cos\phi\end{pmatrix}\in\mathrm{SO}(2,\R)
$$
is the rotation matrix with rotating angle $\phi$. Then we have the following corollary of the proof of Theorem~B. In fact, combining with \cite[Corollary 2,5]{Wang-Zhang}, we have the following results. Let $L(\a,\t,\l)$ be the Lyapunov exponents of the dynamical systems $(\a,B^{(\t,\l)})$. In other words,
$$
L(\a,\t,\l)=\lim_{n\rightarrow\infty}\frac1n\int_{\R/\Z}\log\|B^{(\t,\l)}_n(x)\|dx.
$$
Then we have the following results which give relatively complete description of dynamical behavior of the one-parameter family (over $\t$) of dynamical systems $(\a,B^{(\t,\l)})$ for large $\l$, hence, illustrate the power of our techniques.
\begin{corollary}\label{c.rotation_family}
Fix a $\a\in DC_\tau$. Let $\psi\in C^2(\R/\Z,\R)$ and $B^{(\t,\l)}\in C^2(\R/\Z,\mathrm{SL}(2,\R))$ be as above. Then there exists a $\l_0=\l_0(\a,\psi)$ such that for all $\l>\l_0$:
\begin{itemize}
\item $\left\{\t\in\R:(\a,B^{(\t,\l)})\in\CU\CH\right\}$ is open and dense.
\item $L(\a,\t,\l)>\frac{99}{100}\log \l$ for all $\t\in\R$.
\item There exists a $\sigma\in(0,1)$ depending on $\a$ such that
      $$
      |L(\a,\t,\l)-L(\a,\t',\l)|<Ce^{-c(\log|\t-\t'|^{-1})^\sigma}.
      $$
\end{itemize}
Moreover, we also have
$$
\lim_{\l\rightarrow\infty}\mathrm{Leb}\{\t\in[0,\pi):(\a,B^{(\t,\l)})\notin\CU\CH\}=\mathrm{Leb}[\psi(\R/\Z)].
$$

\end{corollary}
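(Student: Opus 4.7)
The corollary bundles four assertions. Two of them — the uniform lower bound $L > \tfrac{99}{100}\log\lambda$ and the almost-H\"older continuity of $L$ in $\theta$ — are exactly \cite[Corollary 2,5]{Wang-Zhang} specialized to this rotation family, so the plan for those is simply to quote them. The remaining items, namely openness and density of $\CU\CH$ in $\theta$ and the Lebesgue limit, are what actually need to be proved here, and both rely on the \emph{proof} of Theorem~B rather than only on its statement.

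For the openness and density assertion, openness is free since $\CU\CH$ is $C^0$-open by definition. For density, the plan is to port the entire induction of Theorem~B to the new family with $\theta$ in place of $E$. The rotation factor $R_\theta$ enters $B^{(\theta,\lambda)}(x)=\mathrm{diag}(\lambda,\lambda^{-1})R_{\psi(x)+\theta}$ smoothly and monotonically, in the same way $E$ enters the Schr\"odinger matrix, so it can play the role of the energy parameter throughout the induction. The two non-degenerate extrema of $\psi$ supply the one-step critical points; the extra hypothesis $\sup\psi-\inf\psi<\pi$ confines rotation angles to one fundamental domain so the critical analysis is unambiguous; and the Diophantine assumption on $\alpha$ drives the return-time estimates exactly as before. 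With these substitutions the nice class of $C^2$ functions (Definition~\ref{d.type-of-functions}), the location of $n$-step critical points, the strong-resonance mechanism that separates $s_{r_n}$ from $u_{r_n}$, and the $\partial_\theta$-variant of the $\partial_E$ derivative estimate (which uses uniform positivity of the Lyapunov exponent, supplied by the second item) all go through essentially unchanged. The conclusion produces a $\CU\CH$ parameter in every nondegenerate $\theta$-interval.

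For the Lebesgue limit the upper bound is elementary and the lower bound is the main obstacle. For the upper bound, if $\mathrm{dist}\bigl(\theta,\ (\tfrac{\pi}{2}-\psi(\R/\Z))\bmod\pi\bigr)>\delta$, then $|\cos(\psi(x)+\theta)|\geq c(\delta)>0$ uniformly in $x$, so for $\lambda$ large (depending on $\delta$) each $B^{(\theta,\lambda)}(x)$ has trace of modulus exceeding $2$ uniformly and admits a continuous invariant cone field, yielding $\CU\CH$ directly; letting $\delta\to 0$ as $\lambda\to\infty$ gives $\limsup_\lambda \mathrm{Leb}\{\theta:\text{non-}\CU\CH\}\leq \mathrm{Leb}[\psi(\R/\Z)]$. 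For the lower bound, for $\theta$ strictly inside the bad interval $(\tfrac{\pi}{2}-\sup\psi,\ \tfrac{\pi}{2}-\inf\psi)\bmod\pi$ there exists $x_\theta$ for which $B^{(\theta,\lambda)}(x_\theta)$ is $\mathrm{SL}(2,\R)$-conjugate to $R_{\pi/2}$, producing a genuine one-step critical fiber. The plan here is to exploit the continuous and monotone dependence of the fibered rotation number $\rho(\theta,\lambda)$ on $\theta$: by Johnson's theorem in its general-cocycle form, $\CU\CH$ holds precisely on the plateaus of $\rho$ where it is locked at a resonant value $p\alpha+q\bmod\pi$, and a quantitative plateau estimate derived from the induction of Theorem~B, combined with the Lyapunov-exponent bounds from the second and third items, should show that the total Lebesgue measure of these plateaus inside the bad interval is $o(1)$ as $\lambda\to\infty$. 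This plateau estimate is the step I expect to be the hardest, and the one most sensitive to the fine structure of \cite{Wang-Zhang}.
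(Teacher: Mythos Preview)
Your plan for the first three bullets is exactly what the paper does. Openness of $\CU\CH$ is automatic; density is obtained by running the Theorem~B induction with $\theta$ in the role of the energy parameter (the paper packages this abstractly as Corollary~\ref{c.general} in the appendix and then remarks that the density assertion of Corollary~\ref{c.rotation_family} is a ``direct consequence''); and the two Lyapunov-exponent statements are quoted verbatim from \cite[Corollary~2,\,5]{Wang-Zhang}. So on those items there is nothing to add.

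For the Lebesgue limit, note that the paper supplies \emph{no} argument at all: the statement is simply bundled into the corollary under the umbrella ``combining with \cite[Corollary~2,\,5]{Wang-Zhang}'', so the authors evidently regard it as already covered by the Wang--Zhang machinery rather than by anything new here. Your upper-bound argument is correct and is essentially the one-step case of Proposition~\ref{p.large-norm}: if $\theta\notin\frac{\pi}{2}-\psi(\R/\Z)$ then $g_1(x,\theta)=\frac{\pi}{2}-\psi(x)-\theta$ is bounded away from $0$ uniformly in $x$, and Proposition~\ref{p.large-norm} gives $\CU\CH$ directly for $\lambda$ large. The lower bound is where your proposal departs from the paper, because there is nothing in the paper to compare it to. Your rotation-number/plateau strategy is plausible in outline, but you should be aware that the Theorem~B induction is engineered to locate \emph{one} $\CU\CH$ parameter in each interval; its output is density information, which is orthogonal to measure information. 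Extracting a quantitative bound on the \emph{total} plateau length from that induction is not a corollary of anything proved in this paper and would require a separate argument (e.g.\ a $\lambda$-independent bound on $d\rho/d\theta$ on the non-$\CU\CH$ set, or a direct summation of resonance-window measures over the induction steps). You are right to flag this as the hard step; as written it is a genuine gap in your proposal, though not one the paper itself fills.
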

Corollary~\ref{c.rotation_family} can in particular be applied to get the density of $\CU\CH$ in the one-parameter family of some $C^2$ Szeg\H o cocycles, which arise naturally in the study of orthogonal polynomials on the unit circle. See appendix Section~\ref{s.generalization}.

We also have the following generalization which shows that the geometric structure of spectrum is a local property with respect to the value of $v$.
\begin{corollary}\label{c.local}
  Fix a Diophantine number $\a$ and consider a potential $v(x)\in C^2(\R/\Z,\R)$. Assume the interval $[E_1, E_2]$ satisfies that for each $E$ in it, the equation $v(x)=E$ has at most two solutions. Moreover, we assume that $\frac{d^2v}{dx^2}(x_0)\neq0$ if $x_0$ is the only solution. Then there exists a $\l_0=\l_0(a,v)$ such that for all $\l>\l_0$
   $$
   \{E: (\a, A^{(E-\l v)})\in\CU\CH\}
   $$
   is open and dense in $[E_1,E_2]$.
\end{corollary}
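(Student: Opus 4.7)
The plan is to run a localization of the proof of Theorem~B. Openness of the set $\{E\in[E_1,E_2]:(\a,A^{(E-\l v)})\in\CU\CH\}$ is immediate since $\CU\CH$ is open in the $C^0$ topology and $E\mapsto A^{(E-\l v)}$ is continuous in $E$, so the entire task is to establish density.

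Fix any non-degenerate sub-interval $J\subset[E_1,E_2]$; I would produce an $E\in J$ with $(\a,A^{(E-\l v)})\in\CU\CH$. The key observation is that the induction scheme of Theorem~B uses the two-extremum hypothesis on $v$ only through the structure of its initial critical points: at the first step, the zeros of $s_1(\cdot,E)-u_1(\cdot,E)$ are detected from the level set of $v$ at the level corresponding to $E$ (after the usual scaling of the cocycle by $\l$), which by the hypothesis on $[E_1,E_2]$ consists of at most two points, and in the one-point case has a non-degenerate second derivative. This is precisely the geometric input that the induction of \cite{Wang-Zhang} requires.

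With the same initial data in place, I would replay the induction step by step. At each step $n$ one constructs a return time $r_n$, a nested interval $I_n(E)$, and verifies that $s_{r_n}(\cdot,E)-u_{r_n}(\cdot,E)$ belongs to the ``nice'' class of $C^2$ functions, with quantitative lower bounds on $|\pa_x(s_{r_n}-u_{r_n})|$ and $|\pa_E(s_{r_n}-u_{r_n})|$. All of these estimates only depend on the behavior of $v$ on a small neighborhood of the current critical points, and by construction those critical points stay near the level set determined by $E\in J$. Hence the global two-extremum hypothesis of Theorem~A is never essential; its local avatar on $[E_1,E_2]$ is enough to drive the induction through to the detection of strong resonance and the resulting separation of asymptotic stable and unstable directions, which then produces an $E\in J$ with $(\a,A^{(E-\l v)})\in\CU\CH$ exactly as in Theorem~B.

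The main obstacle is precisely this verification: one must go through the induction of \cite{Wang-Zhang} and of Theorem~B and confirm, step by step, that every quantitative estimate uses only local information about $v$ on the (eventually small) region visited by the orbit of the critical points, and never invokes global features of $v$ such as the existence or behavior of extrema whose values lie outside $[E_1,E_2]$. I expect this to be essentially bookkeeping given the inherently local flavor of the induction, but it is the one non-trivial point and is the reason for stating the corollary separately from Theorem~B.
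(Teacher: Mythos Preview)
Your proposal is correct and matches the paper's approach: the paper formalizes exactly your observation by stating an abstract version (Corollary~\ref{c.general}) for any one-parameter family $B^{(t,\l)}=\Lambda(x)\cdot R_{\psi(x,t)}$ whose initial angle function $\psi(\cdot,t)$ is of type~$\I$ or~$\II$ on the first critical intervals and satisfies $|\partial\psi/\partial t|>c$, and then notes that Corollary~\ref{c.local} is a direct consequence. Your ``localization of Theorem~B'' is precisely the content of that abstraction, and the bookkeeping you flag---that every step of the induction uses only the type~$\I$/$\II$/$\III$ structure near the current critical points and the transversality in $t$, never any global feature of $v$---is exactly what Corollary~\ref{c.general} encodes.
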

See Appendix Section~\ref{s.generalization} for more detailed discussion regarding the Corollaries above.\\

Another advantage of our approach is that, we are able to fix both frequency and potential, which is dynamically more natural. For example, for those works mentioned in Section~\ref{ss.main-results}, the generic results of \cite{eliasson,avilabochidamanik} necessary involves variation of potentials, \cite{goldsteinschlag} need to vary frequency, and the non-Cantor results of \cite{aviladamanikzhang} varies both frequency and potential.

%It is worthy to remark that we need not the information on the rotation numbers in our proof. In contrast, the theorem of Johnson-Moser [JM] on rotation numbers is critical in papers of [Puig], [AJ] and [Almost-Almost].

Finally, it's possible for our approach to do the following further developments. We may get some estimates on the size of spectral gaps. It may also be useful in investigating the dry version of Cantor spectrum for smooth potentials. In otherwise words, we may investigate for which type of smooth potentials, the spectral gaps corresponding to the fibred ration numbers $\{\frac{k\a}{2}\in\R/\Z\}_{k\in Z\setminus\{0\}}$ are all opened up. Moreover, as commented in \cite{Wang-Zhang}, the idea of study the asymptotic stable and unstable directions is not restricted to the models in Theorem~A. For example, we may consider more general smooth potentials, relax the Diophantine condition to \emph{Brjuno} or even \emph{weak Liouville} conditions, or to some other type of base dynamics such as doubling map on the unit circle.

\subsection{Structure of the paper}

For the remaining part of this paper, Preliminary Section~\ref{s.preliminary} will mainly be some detailed description of consequences from the Induction Theorem of \cite{Wang-Zhang}. These consequences are the cornerstones of this paper. In particular, as mentioned in Section~\ref{ss.idea-of-proof}, we will define a class of nice $C^2$ functions, and state the theorem that for all energies $E$, the function $s_{r_n}(\cdot,E)-u_{r_n}(\cdot,E):I_n\rightarrow\R\PP^1$ falls into one of them. In particular, we will explain in detail how the consequence of the occurrence of \emph{resonance}. In Section~\ref{s.proof-of-main-theorem}, we will prove Theorem~B, hence the Main Theorem A. In appendix Section~\ref{s.generalization}, we will discuss some generalization.

\section{Preliminary and review of \cite{Wang-Zhang}}\label{s.preliminary}
This section is basically a review of some of the results of \cite{Wang-Zhang} that will be explicitly used in this paper. More detailed description and proof of these results, in particular the proof of Lemma~\ref{l.reduce-form}--\ref{l.s-deri-resonance} and Theorem~\ref{t.iteration}, can be found in \cite{Wang-Zhang}.

From now on, if not stated otherwise, let $C,\ c$ be some universal positive constants depending only on $v$ and $\a$, where $C$ is large and $c$ small. For two constants $a,b>0$, by $a \gg b $ or $b\ll a$, we mean that $a$ is sufficiently larger than $b$.

Recall that for $\theta\in \R/(2\pi \Z)$, we have the following rotation matrix
$$
R_\t=\begin{pmatrix}\cos\t&-\sin\t\\ \sin\t&\cos\t\end{pmatrix}\in\mathrm{SO}(2,\mathbb R).
$$

Define the map
$$
s:\mathrm{SL}(2,\R)\rightarrow\R\PP^1=\R/(\pi\Z)
$$
so that $s(A)$ is the most contraction direction of $A\in\mathrm{SL}(2,\R)$. Let $\hat s(A)\in s(A)$ be an unit vector. Thus, $\|A\cdot\hat s(A)\|=\|A\|^{-1}$. Abusing the notation a little, let
$$
u:\mathrm{SL}(2,\R)\rightarrow\R\PP^1=\R/(\pi\Z)
$$
be that $u(A)=s(A^{-1})$. Then for $A\in\mathrm{SL}(2,\R)$, it is clear that
\beq\label{polar-form-single-matrix}
A=R_u\cdot\begin{pmatrix}\|A\| &0\\ 0 &\|A\|^{-1}\end{pmatrix}\cdot R_{\frac\pi2-s},
\eeq
where $s,u\in [0,2\pi)$ are some suitable choices of angles correspond to the directions $s(A),u(A)\in \R/(\pi\Z)$.

Set $t=\frac E\l$. Then instead of proving Theorem~B directly, we will use the following form of the cocycle map.
\begin{lemma}\label{l.reduce-form}
 Let $\CI\subset\R$ be some compact interval. For $x\in\mathbb R/\mathbb Z$ and $t\in\CI$, define  the following cocycles map
\beq\label{polar-decom}
A(x,t)=\Lambda(x)\cdot R_{\phi(x,t)}:=\begin{pmatrix}\l(x)&0\\0&\l^{-1}(x)\end{pmatrix}\cdot\begin{pmatrix}\frac{t-v(x)}{\sqrt{(t-v(x))^2+1}}&\frac{-1}{\sqrt{(t-v(x))^2+1}}\\
\frac{1}{\sqrt{(t-v(x))^2+1}}&\frac{t-v(x)}{\sqrt{(t-v(x))^2+1}}\end{pmatrix},
\eeq
where $\cot\phi(x,t)=t-v(x)$. Assume
\beq\label{norm1-deri-control}
\l(x)>\l,\ \left|\frac{d^m\l(x)}{dx^m}\right|<C\l,\ m=1,2.
\eeq
Then to prove Theorem~B, it is enough to prove the corresponding results for (\ref{polar-decom}).
\end{lemma}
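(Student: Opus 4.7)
The plan is to produce a $C^2$ gauge conjugation that converts the Schr\"odinger cocycle into the normal form (\ref{polar-decom}), and to verify that this conjugation preserves all the structure relevant to the proof of Theorem~B. First I reparametrize the spectral variable by setting $E=\lambda t$ for $t$ in a compact interval $\CI\subset\R$ containing the range of $v$; then the Schr\"odinger matrix reads
$$A^{(E-\l v)}(x)=\begin{pmatrix}\l(t-v(x))&-1\\ 1&0\end{pmatrix}.$$
I seek a $C^2$ map $M:\R/\Z\to\mathrm{SL}(2,\R)$, with both $M$ and $M^{-1}$ bounded in $C^2$ by constants depending only on $v$ and $\a$ (in particular independent of $\l$), such that
$$M(x+\a)^{-1}\,A^{(E-\l v)}(x)\,M(x)=\L(x)\,R_{\phi(x,t)},$$
where $\cot\phi(x,t)=t-v(x)$ and $\L(x)=\mathrm{diag}(\l(x),\l^{-1}(x))$ satisfies the bounds in (\ref{norm1-deri-control}).

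The geometric idea is a polar-type decomposition. The angle $\phi$ appearing in the lemma is exactly the argument of the unit vector $(t-v(x),1)^{T}/\sqrt{(t-v(x))^2+1}$, which is the direction naturally picked out by the first column of $A^{(E-\l v)}(x)$. Multiplying by a suitable rotation aligns this direction and leaves a diagonal stretch $\L(x)$. By choosing $M$ to absorb the $t$-dependence into $R_{\phi(x,t)}$ and into the uniformly bounded factor, one isolates a scale $\l(x)$ that depends only on $x$ (and on the coupling $\l$). Because $v\in C^2$ with only non-degenerate extremal points, $\l(x)$ inherits $C^2$ regularity with $|d^m\l/dx^m|\le C\l$ for $m=1,2$, and can be arranged to satisfy $\l(x)>\l$ uniformly in $x$.

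Since $M,M^{-1}$ are uniformly bounded, the iterates of the two cocycles satisfy
$$c\,\|(\L R_\phi)_n(x)\|\le\|A^{(E-\l v)}_n(x)\|\le C\,\|(\L R_\phi)_n(x)\|$$
uniformly in $n$ and $x$. By Proposition~\ref{p.ueg} this gives equivalence of $\CU\CH$ for the two cocycles, so the existence of an energy $E\in S$ with $(\a,A^{(E-\l v)})\in\CU\CH$ (i.e.\ Theorem~B) is equivalent to the analogous statement for $(\a,A(\cdot,t))$ with $t\in S/\l$. More importantly for the induction of Section~\ref{s.proof-of-main-theorem}, the asymptotic stable and unstable directions $s_n,u_n$ of the two cocycles are related by the $C^2$-bounded change of coordinates induced by $M$, so the zeros of $s_n-u_n$ and the quantitative separation estimates used to detect $\CU\CH$ transfer between the two models with only uniformly bounded distortion.

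The main obstacle is to produce $M$ so that the resulting $\l(x)$ is independent of $t$ while still obeying $\l(x)>\l$ uniformly. A naive polar decomposition would give a $t$-dependent stretch of order $\l\sqrt{(t-v(x))^2+1}$, which collapses to the wrong order near $t=v(x)$; the refinement is to exploit the $t$-dependence of the rotation angle $\phi$ to absorb this $t$-dependence, which is precisely why the angle $\phi$ with $\cot\phi=t-v(x)$ appears in the target form. Checking that the resulting $C^2$ bounds on $\l(x)$ hold with constants independent of $\l$ is then a direct computation using the $C^2$ regularity of $v$.
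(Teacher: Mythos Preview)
Your overall architecture---conjugate the Schr\"odinger cocycle to the normal form $\Lambda R_\phi$ and then observe that $\CU\CH$ and the geometry of $s_n-u_n$ transfer under bounded conjugation---is the right one, and is the route taken in \cite{Wang-Zhang}, to which the paper defers for the actual proof. The gap lies in the construction of $M$.

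You seek a $t$-\emph{independent} map $M:\R/\Z\to\mathrm{SL}(2,\R)$ such that $M(x+\a)^{-1}A^{(E-\l v)}(x)M(x)=\L(x)R_{\phi(x,t)}$ with $\l(x)$ independent of $t$ and $\cot\phi(x,t)=t-v(x)$ exactly. But this is impossible: the left-hand side is \emph{affine} in $t$ (since $A^{(E-\l v)}(x)$ is), whereas the right-hand side carries the non-affine factor $[(t-v(x))^2+1]^{-1/2}$. Conjugation by a fixed $M$ preserves affinity in $t$, so equality for all $t$ cannot hold. Your sentence about ``exploiting the $t$-dependence of $\phi$ to absorb this $t$-dependence'' is thus a restatement of the difficulty rather than its resolution. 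The concrete reduction in \cite{Wang-Zhang} first conjugates by the \emph{constant} matrix $\mathrm{diag}(\sqrt\l,1/\sqrt\l)$, yielding
\[
\begin{pmatrix}\l(t-v(x)) & -\l^{-1}\\ \l & 0\end{pmatrix},
\]
whose norm is $\ge\l$ for every $(x,t)$; one then takes the honest polar decomposition and conjugates by the resulting (now $t$-dependent) rotation $R_{u(\cdot-\a,\cdot)}\in\mathrm{SO}(2,\R)$. The outcome is $\L(x,t)R_{\phi(x,t)}$ with $\l(x,t)\ge\l$, $|\partial^m\l|\le C\l$ in both variables, and $\cot\phi(x,t)=t-v(x)+O(\l^{-2})$. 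The induction of Theorem~\ref{t.iteration} uses only these bounds together with the qualitative structure of $g_1=\frac\pi2-\phi$ near its zeros, not literal $t$-independence of $\l$; the clean form \eqref{polar-decom} is a convenient representative of this class, not the exact image of the Schr\"odinger map under a $t$-independent gauge. You should either drop the $t$-independence of $M$ and $\l$ and carry out the computation above, or argue directly that the conjugated cocycle satisfies the hypotheses actually invoked downstream.
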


From now on, let $A=A(x,t)$ be as in (\ref{polar-decom}). Abusing the notation a little bit, for $n\ge 1$, we define
$$
s_n(x,t)=s[A_n(x,t)],\ u_n(x,t)=s[A_{-n}(x,t)].
$$
We call $s_n$ (respectively, $u_n$) the $n$-step \emph{stable} (respectively, \emph{unstable}) direction. It is not very difficult to see that they converge to the stable and unstable directions in case one has a positive Lyapunov exponent, see, for example, the proof of \cite[Theorem 1]{zhang2}.

Obviously, we have that $u_1(x,t)=0$ and
$$
s_1(x,t)=\frac\pi2-\phi(x,t)=\frac\pi2-\cot^{-1}[t-v(x)]=\tan^{-1}[t-v(x)] .
$$
We define $g_1(x,t)=s_1(x,t)-u_1(x,t)$. Thus, it clearly holds that
\beq\label{g1}
g_1(x,t)=\tan^{-1}[t-v(x)].
\eeq

We may restrict the $t$ to the following interval:
$$
t\in J:=[\inf v-\frac 2\l_0, \sup v+\frac 2\l_0] \mbox{ for all }\l>\l_0.
$$

We first define the following three types of functions, which basically classify for each fixed $t$, all the possibilities of $g_n(\cdot,t)$ as function of $x$.

Let $B(x,r)\subset\T$ be the ball centered around $x\in\T$ with radius $r$. For a connected interval $J\subset\T$ and constant $0<a\le1$, let $aJ$ be the subinterval of $S$ with the same center and whose length is $a|S|$. Let $I=B(0,r)$ and $l_0$ satisfy $l_0\gg r^{-1}\gg 1$. Consider for some small $\beta>0$, a function $l: I\rightarrow \R$ such that
\beq\label{large-parameter}
l(x)>l_0\mbox{ and }\frac{dl^m}{dx^m}(x)<l(x)^{1+\beta},\ \forall x\in I,\ m=1,2.
\eeq
Then we define the following types of functions, see Figure~\ref{f.graph-type123} for their graphs.
\begin{defi}\label{d.type-of-functions}
Let $I$ and $l$ be as above. Let $f\in C^2(I,\mathbb R\mathbb P^1)$. Then
\vskip0.2cm
\begin{itemize}
\item \noindent {\bf $f$ is of type $\mathrm{I}$} if we have the following.
    \begin{itemize}
    \item $\|f\|_{C^2}<C$ and $f(x)=0$ has only one solution, say $x_0$, which is contained in $\frac{I}{3}$;
    \item $\frac{df}{dx}=0$ has at most one solution on $I$ while $|\frac{df}{dx}|>r^2$ for all $x\in B(x_0,\frac r2)$;
    \item let $J\subset I$ be the subinterval such that $\frac{df}{dx}(J)\cdot\frac{df}{dx}(x_0)\le 0$, then $|f(x)|>cr^3$ for all $x\in J.$
    \end{itemize}
     Let $\mathrm{I}_{+}$ denotes the case $\frac{df}{dx}(x_0)> 0$ and $\mathrm{I}_{-}$ for $\frac{df}{dx}(x_0)<0$.

\vskip0.2cm
\item \noindent {\bf $f$ is of type $\mathrm{II}$} if we have the following.
     \begin{itemize}
     \item $\|f\|_{C^2}<C$ and $f(x)=0$ has at most two solutions which are in $\frac{I}{2}$;
     \item $\frac{df}{dx}(x)=0$ has one solution which is contained in $\frac{I}{2}$;
     \item $f(x)=0$ has one solution if and only if it is the $x$ such that $\frac{df}{dx}(x)=0$;
     \item $\left|\frac{d^2f}{dx^2}\right|>c$ whenever $|\frac{df}{dx}|<r^2$.
     \end{itemize}

\vskip0.2cm
\item \noindent {\bf $f$ is of type $\mathrm{III}$} if for $l: I\rightarrow\R$ as in (\ref{large-parameter}),
\beq\label{type3}
f=\tan^{-1}\left(l^2[\tan f_1(x)]\right)-\frac\pi2+f_2.
\eeq
Here either $f_1$ is of type $\mathrm{I}_+$ and $f_2$ of type $\mathrm{I}_-$, or $f_1$ is of type $\mathrm{I}_-$ and $f_2$ of type $\mathrm{I}_+$.
\end{itemize}
\end{defi}

\begin{figure}
\begin{center}
\begin{tikzpicture}[yscale=1.5]
\draw [->] (-1.5,0) -- (1.5,0);
\draw [->] (0,-0.5) -- (0,1.3);
\draw [thick,domain=-1.4:1.4] plot (\x, {0.1*\x*\x+0.3*\x});
\node [above] at (0,1.3) {$f(x)$};
\node [right] at (1.5,0) {$x$};
\node[align=left, below] at (0,-0.5)%
{Type $\I$};
\end{tikzpicture}\ \ \ \
\begin{tikzpicture}[yscale=1.5]
\draw [->] (-1.5,0) -- (1.5,0);
\draw [->] (0,-0.5) -- (0,1.3);
\draw [thick,domain=-1.4:1.4] plot (\x, {0.25*\x*\x-0.1});
\node [above] at (0,1.3) {$f(x)$};
\node [right] at (1.5,0) {$x$};
\node[align=left, below] at (0,-0.5)%
{Type $\II$};
\end{tikzpicture}\ \ \ \
\begin{tikzpicture}[yscale=0.7]
\draw [->] (-1.5,0.6) -- (1.5,0.6);
\draw [->] (0,0) -- (0,3.75);
\draw [thick,domain=-1:1] plot (\x, {1/180*pi*atan(20*tan(\x r))+0.5*pi-0.5*(\x-0.9)});
\draw [semithick,domain=-1.2:1.2] plot (\x, {0*\x+pi});
\node [above] at (0,3.75) {$f(x)$};
\node [right] at (1.5,0.6) {$x$};
\node [left] at (-1.2,3.14) {$\pi$};
\node[align=left, below] at (0,-0.1)%
{Type $\III$};
\end{tikzpicture}
\caption{Graph of Type $\I$, $\II$ and $\III$ functions}
\label{f.graph-type123}
\end{center}
\end{figure}
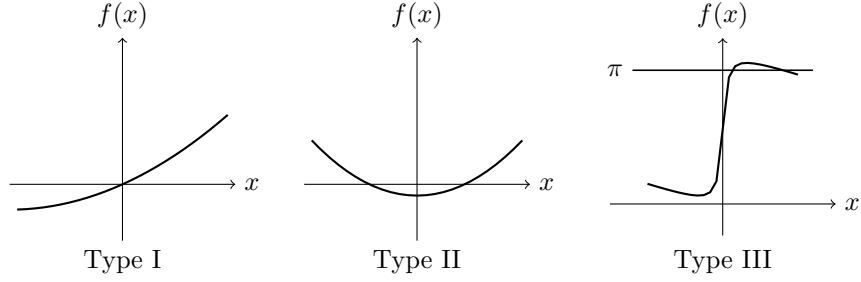

Now we investigate more properties of type $\III$ function. Without loss of generality, let $f$ be as in (\ref{type3}) with $f_1$ be type $\mathrm{I}_+$ and $f_2$ type $\mathrm{I}_-$ throughout this section. We may further assume that $f_1(0)=0$ and $f_2(d)=0$ with $0\le d\le \frac 23r$. Let
$$
X=\{x\in I: \mathbb R\mathbb P^1\ni |f(x)|=\inf_{y\in I}|f(y)|\}.
$$
Then it is easy to see that $X$ contains at most two points, say $X=\{x_1,x_2\}$ with $x_1\le x_2$. Then we have the following lemma.

\begin{lemma}\label{l.s-deri-resonance}
Let $f$ be of type $\mathrm{III}$. Let $r^{2}\le \eta_j\le r^{-2}$, $0\le j\le 4$. Then
\beq\label{type3-zero}
|x_1|<Cl^{-\frac 34},\ |x_2-d|<Cl^{-\frac 34}.
\eeq
In particular, if $f(x_1)=f(x_2)=0$, then
\beq\label{type3-zero-order}
0<x_1\le x_2<d.
\eeq
If $f(x_1)=f(x_2)\neq 0$, then
\beq\label{type3-zero-1}
x_1=x_2.
\eeq

There exist two distinct points $x_3,\ x_4\in B(x_1,\eta_0l^{-1})$ such that $\frac{df}{dx}(x_j)=0$ for $j=3,4$, and $x_3$ is
 a local minimum with \beq\label{type3-nonzero-minimum}
f(x_3)>\eta_1l^{-1}-\pi.
\eeq
 Moreover, we have the following. If $d\ge \frac r3$, then it holds that
\beq\label{type3-1}
|f(x)|>cr^3,\ x\notin B(x_1,Cl^{-\frac14})\cup B(x_2,\frac r4);\ \|f-f_2\|_{C^1}<Cl^{-\frac32},\ x\in B(x_2,\frac r4).
\eeq

If $d<\frac r3$, then it holds that
\beq\label{type3-nondegeneracy}
\left|\frac{d^2f}{dx^2}(x)\right|>c \mbox{ whenever } \left|\frac{df}{dx}(x)\right|\le r^2\mbox{ for } x\in B(X,\frac r6)
\eeq
and $|f(x)|>cr^3$ for all $x\notin B(X,\frac r6)$.\\

Finally, we have the following bifurcation as $d$ varies. There is a $d_0=\eta_2 l^{-1}$ such that:
\begin{itemize}
\item if $d>d_0$, then $f(x)=0$ has two solutions;
\item if $d=d_0$, then $f(x)=0$ has exactly one tangential solution. In other words, $x_1=x_2=x_4$ and $f(x_4)=0$;
\item if $0\le d<d_0$, then $f(x)\neq 0$ for all $x\in I$. Moreover, we have
      $$
      \min_{x\in I}|f(x)|=-\eta_3l^{-1}+\eta_4d.
      $$
\end{itemize}
See Figure~\ref{f.bifurcation} for the bifurcation procedure.
\end{lemma}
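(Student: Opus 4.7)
The plan is to treat $f$ as a small $C^2$ perturbation (by $f_2$) of the explicit model $F(x) := \tan^{-1}(l^2 \tan f_1(x)) - \pi/2$, which rises by approximately $\pi$ across a narrow transition centered at the zero of $f_1$ (taken to be $0$) and is essentially flat at $\pm\pi/2$ outside. I split $I$ accordingly into a transition window and the complementary bulk. In the bulk, i.e.\ for $|x| \gtrsim l^{-1/4}$, the type-$\I_+$ property of $f_1$ forces $|l^2 \tan f_1(x)| \gtrsim l^{7/4}$, so $f = f_2 + O(l^{-7/4})$ on the right bulk and $f = -\pi + f_2 + O(l^{-7/4})$ on the left. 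Combined with the type-$\I_-$ lower bound $|f_2| > c r^3$ off a neighborhood of $d$, this immediately yields $|f| > cr^3$ outside the two extremal windows when $d \ge r/3$, the $C^1$-closeness $\|f - f_2\|_{C^1} < Cl^{-3/2}$ on $B(x_2, r/4)$, and $x_2 = d + O(l^{-3/2})$ by the inverse function theorem.

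Inside the transition window I change variable to $y := l^2 \tan f_1(x)$, a monotone $C^2$ diffeomorphism with $dy/dx|_0 \approx l^2 f_1'(0)$. Writing $f_2(x(y)) = f_2(0) - c_2 y/(l^2 f_1'(0)) + O(l^{-2})$ with $c_2 := -f_2'(0) > 0$, the zero equation $f(x) = 0$ becomes, for $y > 0$,
\[
h(y) := \cot^{-1}(y) + \frac{c_2\, y}{l^2 f_1'(0)} = f_2(0) + O(l^{-2}),
\]
and the critical-point equation $f'(x) = 0$ is exactly $h'(y) = 0$; zeros and critical points are thus simultaneously governed by $h$. Its unique minimum $h_{\min} = 2\sqrt{c_2/f_1'(0)}/l$ is attained at $y_\ast = l\sqrt{f_1'(0)/c_2}$, producing two critical points $x_3 < 0 < x_4$ of size $\sim l^{-1}$ (both inside $B(0, \eta_0 l^{-1})$) with $x_3$ a local minimum by the sign of $f''$ and $f(x_3) + \pi = \cot^{-1}(|y_\ast|) + f_2(x_3) \sim l^{-1}$. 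The zero equation has two, one, or no solutions according as $f_2(0) \gtrless h_{\min}$; the Taylor expansion $f_2(0) = c_2 d + O(d^2)$ converts this into the bifurcation threshold $d_0 = \eta_2 l^{-1}$, the two roots $y_1 \approx 1/f_2(0)$ and $y_2 \approx l^2 f_2(0)/c_2$ translate to $|x_1|, |x_2 - d| = O(l^{-1}) \ll l^{-3/4}$, the ordering $0 < x_1 < x_2 < d$ is forced by $y_1 < y_\ast < y_2$ together with $f_2(x_2) > 0$, and the tangency $x_1 = x_2 = x_4$ arises precisely when $y_1 = y_2 = y_\ast$.

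The most delicate regime is $d < r/3$, where the neighborhood of $d$ overlaps the transition window and $x_1, x_2$ can merge, so the monotonicity argument from the $d \ge r/3$ case fails. The new requirement is the non-degeneracy $|f''(x)| > c$ whenever $|f'(x)| \le r^2$ on $B(X, r/6)$. Fortunately the second derivative in $x$ of $\tan^{-1}(l^2 \tan f_1(x))$, dominated by the $-2y (y')^2/(1+y^2)^2$ contribution, has magnitude $\sim l$ near $y = \pm y_\ast$ and so dwarfs the bounded $f_2'' = O(1)$, giving $|f''| \sim l$ at all critical points --- far stronger than needed. The real technical load is therefore not the non-degeneracy itself but the uniform bookkeeping of the higher-order corrections in $h$ across the full range $0 \le d \le 2r/3$ (in particular, controlling how $x(y_\ast)$ and $d_0$ deviate from their leading-order values), so that the leading-order analysis above is sharpened into the precise quantitative bounds the lemma demands.
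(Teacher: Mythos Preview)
The paper does not actually prove this lemma: it is listed among the results whose ``detailed description and proof \ldots\ can be found in \cite{Wang-Zhang}'' (see the opening of Section~\ref{s.preliminary}), so there is no in-paper argument to compare against line by line. That said, your plan is the natural and essentially correct way to analyze a type~$\III$ function, and it matches the structure the paper invokes from \cite{Wang-Zhang} (cf.\ the use of \cite[Lemma~6]{Wang-Zhang} in Remark~\ref{r.iteration} and the proof of Lemma~\ref{l.key}): isolate the narrow transition layer of $\tan^{-1}(l^2\tan f_1)$, pass to the variable $y=l^2\tan f_1(x)$, and reduce both the zero and critical-point equations to the convex auxiliary function $h(y)=\cot^{-1}y+c_2 y/(l^2 f_1'(0))$. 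Your identification $h_{\min}=2\sqrt{c_2/f_1'(0)}\,l^{-1}$ gives the bifurcation threshold $d_0\sim l^{-1}$, the roots $y_1\approx 1/f_2(0)$ and $y_2\approx l^2 f_1'(0) f_2(0)/c_2$ locate $x_1,x_2$, and the computation $f''\approx -2y(y')^2/(1+y^2)^2=-2c_2^2\,y\sim \mp l$ at $y=\pm y_*$ delivers (\ref{type3-nondegeneracy}) with room to spare.

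Two minor points worth tightening. First, your transition-window formula for $y_2$ presupposes $|y_2|\lesssim l$, i.e.\ $f_2(0)\lesssim l^{-1}$, so $d\lesssim l^{-1}$; for larger $d$ the point $x_2$ sits in the bulk and is captured instead by your first paragraph ($f\approx f_2$ there, whence $|x_2-d|\lesssim l^{-2}r^{-2}$). The two regimes should be stitched explicitly to cover the full range $0\le d\le \tfrac{2r}{3}$. Second, (\ref{type3-nondegeneracy}) is required on the whole set $\{|f'|\le r^2\}\cap B(X,r/6)$, not just at critical points; you should note that in the bulk $|f'|\approx|f_2'|>r^2$ on $B(d,r/2)$, so this set is confined to the transition window where your $|f''|\sim l$ estimate applies. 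Both are routine and consistent with your own remark that the residual work is uniform bookkeeping of the lower-order terms (including the mild $x$-dependence of $l$ allowed by (\ref{large-parameter})).
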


\begin{figure}
\begin{center}
\begin{tikzpicture}[yscale=0.7]
\draw [->] (-1.5,0.3) -- (1.5,0.3);
\draw [->] (0,0) -- (0,4);
\draw [thick,domain=-1:1] plot (\x, {1/180*pi*atan(20*tan(\x r))+0.5*pi-0.5*(\x-0.9)});
\draw [semithick,domain=-1.2:1.2] plot (\x, {0*\x+pi});
\draw [dotted, thick, red] (-0.31,0.3) -- (-0.31,0.8);
\draw [dotted, thick, red] (0.32,0.3) -- (0.32,3.3);
\draw[fill=red] (-0.31,0.3) circle [radius=0.055];
\node [below] at (-0.31,0.3) {$x_3$};
\draw[fill=blue] (0.14,pi) circle [radius=0.055];
\node [above] at (0.2,pi) {$x_1$};
\draw[fill=red] (0.32,0.3) circle [radius=0.055];
\node [below] at (0.32,0.3) {$x_4$};
\draw[fill=blue] (0.75,pi) circle [radius=0.055];
\node [above] at (0.75,pi)  {$x_2$};
\node [above] at (0,4) {$f(x)$};
\node [right] at (1.5,0.3) {$x$};
\node [left] at (-1.2,3.14) {$\pi$};
\node[align=left, below] at (0,-0.3)%
{$d>d_0$};
\end{tikzpicture}\ \ \ \
\begin{tikzpicture}[yscale=0.7]
\draw [->] (-1.5,0.3) -- (1.5,0.3);
\draw [->] (0,0) -- (0,4);
\draw [thick,domain=-1:1] plot (\x, {1/180*pi*atan(20*tan(\x r))+0.5*pi-0.5*(\x-0.57)});
\draw [semithick,domain=-1.2:1.2] plot (\x, {0*\x+pi});
\node [above] at (0,4) {$f(x)$};
\node [right] at (1.5,0.3) {$x$};
\node [left] at (-1.2,3.14) {$\pi$};
\node[align=left, below] at (0,-0.3)%
{$d=d_0$};
\end{tikzpicture}\ \ \ \
\begin{tikzpicture}[yscale=0.7]
\draw [->] (-1.5,0.3) -- (1.5,0.3);
\draw [->] (0,0) -- (0,4);
\draw [thick,domain=-1:1] plot (\x, {1/180*pi*atan(20*tan(\x r))+0.5*pi-0.5*(\x-0.4)});
\draw [semithick,domain=-1.2:1.2] plot (\x, {0*\x+pi});
\node [above] at (0,4) {$f(x)$};
\node [right] at (1.5,0.3) {$x$};
\node [left] at (-1.2,3.14) {$\pi$};
\node[align=left, below] at (0,-0.3)%
{$0\le d<d_0$};
\end{tikzpicture}
\caption{Bifurcation between $\CU\CH$ and $\CN\CU\CH$}
\label{f.bifurcation}
\end{center}
\end{figure}

Note by (\ref{g1}), we have $g_1:\R/\Z\times J\rightarrow \R\PP^1$ with $g_1(x,t)=\tan^{-1}[t-v(x)]$. Set $I_{0,j}(t)=\R/\Z$ for all $t\in \CI$ and $j=1,2$. Then inductively, we define the following for each $i\ge 1$.
\begin{itemize}
\item $i$th step \emph{critical points}:
$$
C_i(t)=\{c_{i,1}(t), c_{i,2}(t)\}
$$
with $c_{i,j}(t)\in I_{i-1,j}(t)$ minimizing $\{|g_i(x,t)|,\ x\in I_{i-1,j}(t)\}$. More precise description of $C_i(t)$ will be given in Theorem~\ref{t.iteration}.

\vskip0.2cm
\item $i$th step \emph{critical interval}:
$$
I_{i,j}(t)=\{x:|x-c_{i,j}(t)|\le \frac{1}{2^iq_{N+i-1}^{2\tau}}\},\ I_i(t)=I_{i,1}(t)\cup I_{i,2}(t).
$$

\item $i$th step \emph{return times}:

$$
q_{N+i-1}\le r^{\pm}_i(x, t):I_i(t)\rightarrow\Z^+
$$
is the first return times (back to $I_i(t)$) after time $q_{N+i-1}-1$. Here $r^+_i(x, t)$ is the forward return time and $r^-_i(x, t)$ backward. Let $r_i=\min\{r^+_{i},r^-_{i}\}$ with $r^{\pm}_i=\min_{x\in I_i(t), t\in J}r^{\pm}_i(x, t)$.

\vskip0.2cm
\item Then $i+1$-th step $g_{i+1}$:
 $$
 g_{i+1}(x,t)=s_{r^+_i}(x,t)-u_{r^-_i}(x,t):D_i\rightarrow\R\PP^1,
 $$
 where we define
 $$
 D_i:=\{(x,t):x\in I_{i}(t),\ t\in J\}.
 $$
\end{itemize}

Then the following theorem is from the Induction Theorem (Theorem 3) of \cite{Wang-Zhang}.

\begin{theorem}\label{t.iteration}
For any $\e>0$, there exists a $\l_0=\l_0(v,\a,\e)$ such that for all $\l>\l_0$, the following holds for each $i\ge 2$.
\begin{itemize}

\item  For each $t\in J$, $g_{i}(\cdot, t):I_{i-1}(t)\rightarrow\mathbb R\mathbb P^1$ is of types $\I$, $\II$ or $\III$, which are denoted as case $(i)_\I$, $(i)_\II$ and $(i)_\III$. In cases $(i)_\I$ and $(i)_\II$, as
          function on $I_{i-1}(t)$,
          \beq\label{gi+2-closeto-gi+1}
          \|g_{i}-g_{i-1}\|_{C^2}\le C\l^{-\frac32 r_{i-1}}.
          \eeq
          Moreover, we have the following.
          \begin{itemize}
          \item In case $(i)_\I$ and $(i)_\III$, $I_{i-1,1}(t)\cap I_{i-1,2}(t)=\varnothing$. So we need to consider
          $g_i:I_{i-1,j}(t)\rightarrow\R\PP^1$ respectively for $j=1,2$. Then in case $(i)_\I$, if $g_{i}(\cdot,t)$
          is of type $\I_+$ on $I_{i-1,1}(t)$, then it is of type $\I_-$ on $I_{i-1,2}(t)$, vice versa.
          \vskip0.2cm
          \item In case $(i)_\II$, $I_{i-1,1}(t)\cap I_{i-1,2}(t)\neq \varnothing$. So $g_i$ is of type $\II$ as
          function on the connected interval $I_{i-1}(t)$.
          \end{itemize}
          \vskip0.1cm
          Thus it's clear from Definition~\ref{d.type-of-functions} that in case $(i)_\I$ and $(i)_\II$, $c_{i,j}(t)$ is the only point minimizing $g_i$ on
          $I_{i-1,j}$.  Note in case $(i)_\II$, it's possible that $c_{i,1}(t)=c_{i,2}(t)$. In case $(i)_\III$, however, we may have more choices on each of $I_{i-1,j}(t)$ for $j=1,2$. Comparing Lemma~\ref{l.s-deri-resonance} and figure 2 in case of $d>d_0$, we choose the corresponding $x_2$ of $g_i: I_{i-1,j}(t)\rightarrow\R\PP^1$ as our $c_{i,j}(t)$.

\vskip0.2cm
\item For each $i\ge 1$ and $t\in J$, it holds that
         \beq\label{ci-close-ci+1}
         |c_{i-1,j}(t)-c_{i,j}(t)|<C\l^{-\frac34r_{i-2}},\ j=1,2;
         \eeq

\item For all $x\in I_{i-1}(t)$ and $m=1,2$, it holds that
\begin{align}
\label{norm-i+2}&\|A_{\pm r^{\pm}_{i-1}(x, t)}(x,t)\|>\l^{(1-\e)r^{\pm}_{i-1}(x, t)}\ge \l^{(1-\e)q_{N+i-2}};\\
\label{norm-i+2.2}&\frac{\partial^m(\|A_{\pm r^{\pm}_{i-1}}(x,t)\|)}{\partial \nu^m}<\|A_{\pm r^{\pm}_{i-1}}(x,t)\|^{1+\e},\ \nu=x\mbox{ or }t.
\end{align}

\item In case $(i)_\III$, there exists a unique $k$ such that $1\le |k|< q_{N+i-2}$ and
      $$
      I_{i-1,2}\cap \left(I_{i-1,1}+k\a\right)\neq\varnothing.
      $$
      Moreover, there exists points $d_{i,j}(t)\in I_{i-1,j}(t)$ such that
      $$
      g_{i}(d_{i,j}(t),t)=g_{i}(c_{i,j}(t),t),\ j=,1,2,
      $$
      and the following hold.

     \begin{itemize}
      \item if $|g_{i}(c_{i,j}(t),t)|>c\l^{-\frac1{10}r_{i-1}}$, $j=1$ or $2$, then so are $|g_{i+1}(c_{i+1,j}(t),t)|$
      for $j=1$ and $2$;
      \item if $|g_{i}(c_{i,j}(t),t)|<C\l^{-\frac1{10}r_{i-1}}$, $j=1$ or $j=2$, then there exists a  such that
            \beq\label{i+2-orbit-critical-points}
            \hskip 1cm |c_{i,1}(t)+k\a-d_{i,2}(t)|,\ |c_{i,2}(t)-k\a-d_{i,1}(t)|<C\l^{-\frac1{30} r_{i-1}}.
            \eeq
     \end{itemize}
 (\ref{i+2-orbit-critical-points}) implies that as $i$-th step `critical points', $d_{i,1}(t)$ is essentially the $-k$-orbit of $c_{i,2}(t)$ while $d_{i,2}(t)$ is the $k$-orbit of $c_{i,1}(t)$. Thus, we change the notation as
 \beq\label{k-orbits}
 d_{i,1}(t)=c^{-k}_{i,2}(t),\ d_{i,2}(t)=c^k_{i,1}(t).
 \eeq
\end{itemize}
\end{theorem}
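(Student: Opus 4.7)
The plan is to prove Theorem~\ref{t.iteration} by strong induction on $i$. For the base case $i=2$, one uses $g_1(x,t)=\tan^{-1}(t-v(x))$, whose zero set is controlled by the two non-degenerate extremals $z_1,z_2$ of $v$. A direct analysis of $g_2 = s_{r_1^+} - u_{r_1^-}$ on the critical intervals $I_{1,j}$, using the Diophantine-forced return times $r_1^\pm \ge q_N$, yields a type $\mathrm{I}$, $\mathrm{II}$ or $\mathrm{III}$ classification together with the norm estimates (\ref{norm-i+2})--(\ref{norm-i+2.2}) for $i=2$. For the inductive step, assuming everything through step $i$, I would first identify $c_{i+1,j}(t)$ as the minimizer of $|g_{i+1}|$ on $I_{i,j}(t)$; the closeness (\ref{ci-close-ci+1}) then follows from the $C^2$ proximity of $g_{i+1}$ to $g_i$ that is part of the induction output.

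The central mechanism is the decomposition
$$A_{r_i^+}(x,t) = A_{r_i^+-q_{N+i-1}}\bigl(x+q_{N+i-1}\a,t\bigr)\cdot A_{q_{N+i-1}}(x,t)$$
telescoped along the orbit of $I_i(t)$. Outside the critical intervals the induction hypothesis delivers strong hyperbolicity with well-separated stable and unstable directions, so the polar decomposition (\ref{polar-form-single-matrix}) combined with an avalanche-principle-style estimate yields $s_{r_i^+}$ as a $\|A_{r_i^+}\|^{-2}$-perturbation of the stable direction of the dominant factor. In the \emph{nonresonant} case — where the orbits of $c_{i,1}$ and $c_{i,2}$ do not come closer than $\l^{-cr_i}$ within time $r_i^\pm$ — only one critical point affects the shape of $g_{i+1}$ on each $I_{i,j}$, so $g_{i+1}$ inherits type $\mathrm{I}$ or $\mathrm{II}$ from $g_i$ with (\ref{gi+2-closeto-gi+1}), and the derivative bounds (\ref{norm-i+2.2}) propagate by the chain rule together with the uniform exponential growth.

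The \emph{resonant} case is the substantive one: when a unique $1\le|k|<q_{N+i-1}$ satisfies $I_{i,2}\cap(I_{i,1}+k\a)\ne\varnothing$, both critical intervals contribute to $g_{i+1}$ on $I_{i,1}$. I would split $A_{r_i^+}$ into three blocks around the resonant return of $c_{i,1}$ to a neighborhood of $c_{i,2}$: the middle block of length $k$ carries a large norm $l\sim \|A_k\|\sim\l^{(1-\e)k}$, and passing through it the $\tan^{-1}$ factor of the polar form picks up the type $\mathrm{I}$ shape of $g_i$ evaluated at $x+k\a\in I_{i,2}$, scaled by $l^{-2}$. Taking the difference $s_{r_i^+}-u_{r_i^-}$ and simplifying via identities of the form $\tan^{-1}(y)=\pi/2-\tan^{-1}(1/y)$ then produces the explicit form (\ref{type3}), with $f_1,f_2$ the two type-$\mathrm{I}$ pieces corresponding to $g_i|_{I_{i,1}}$ and $g_i|_{I_{i,2}}$ respectively, and $d = |c_{i,2} - (c_{i,1}+k\a)|$ the bifurcation parameter driving the three regimes of Lemma~\ref{l.s-deri-resonance}. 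The relations (\ref{i+2-orbit-critical-points}) and (\ref{k-orbits}) then read off from this geometric picture.

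The hardest step will be the resonant case: tracking the polar decomposition through a long product of hyperbolic matrices containing two "bad" factors, while maintaining quantitative $C^2$ control of the stable and unstable directions as functions of both $x$ and $t$. In particular, verifying the large-parameter derivative bound $|d^m l/dx^m| < l^{1+\b}$ required by (\ref{large-parameter}) — so that the combined $g_{i+1}$ genuinely lies in the type $\mathrm{III}$ class — demands a delicate comparison of derivatives of $\|A_k\|$ across scales, for which the previous-step estimate (\ref{norm-i+2.2}) is essential but must be combined with a careful accounting of how derivatives accumulate along the orbit bridging the two critical intervals.
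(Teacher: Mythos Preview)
The paper does not actually prove Theorem~\ref{t.iteration}: it is imported wholesale from \cite{Wang-Zhang} (see the opening of Section~\ref{s.preliminary}), and the paper only records the \emph{idea} of proof in Remark~\ref{r.iteration}. Your proposal matches that sketch in all essentials: the proof is by strong induction on $i$; in the nonresonant situation one has $\|g_{i+1}-g_i\|_{C^2}<C\l^{-\frac32 r_{i-1}}$ so the type is inherited; in the resonant situation one splits the product $A_{r_i^\pm}$ at the resonant return time $k$, introduces auxiliary functions $g'_{i,j}$ that are $C^2$-close to $g_i$, and reads off the explicit type~$\III$ formula (\ref{g-i+1}) with large parameter $l_k\sim\l^{(1-\e)k}$. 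Your identification of the derivative bound (\ref{norm-i+2.2}) as the key input for verifying (\ref{large-parameter}) is exactly right.

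One refinement worth flagging: your binary ``resonant/nonresonant'' dichotomy slightly oversimplifies the case structure recorded in Remark~\ref{r.iteration}. The paper's transition diagram is that from $(i)_\I$ or $(i)_\III$ one can only reach $(i+1)_\I$ or $(i+1)_\III$, while from $(i)_\II$ one can reach any of the three; and the transition $(i)_\III\to(i+1)_\III$ --- staying in type~$\III$ without a \emph{new} resonance --- is also in the ``easy'' $C^2$-close regime (\ref{gi+1-close-to-gi}), not in the hard case. The genuinely new analysis is needed only for $(i)_\I\to(i+1)_\III$ and $(i)_\II\to(i+1)_\III$ (and the latter reduces to the former after the interval $I_{i-1}$ splits). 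This is a bookkeeping nuance rather than a gap, and would emerge naturally once you carry out the induction in full.
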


\begin{remark}[Idea of Proof of Theorem~\ref{t.iteration}]\label{r.iteration}
For simplicity, we leave $t$ implicit in $I_{i,j}(t)$ or $I_{i}(t)$ in this remake.

The proof of Theorem~\ref{t.iteration} in \cite{Wang-Zhang} is inductive, and contains the following different cases. For simplicity, let
\beq\label{all-possibilities}
(i)_a\rightarrow (i+1)_b
\eeq
denotes the ``from case $(i)_a$ to case $(i+1)_b$'' where $a$, $b=\I$, $\II$ or $\III$. Then \cite{Wang-Zhang} showed that start from cases $(i)_a$ with $a=\I,\II,\III$, one necessarily falls into cases $(i+1)_b$ with $b=\I,\II,\III$. Then, in (\ref{all-possibilities}),
\vskip0.2cm
\begin{center}
if $a$ is $\I$ or $\III$, then $b$ can and only can be $\I$ or $\III$; \\ if $a$ is $\II$, then $b$ can be $\I$, $\II$ or $\III$.
\end{center}
In particular, only case $(i)_\II$ can lead to case $(i+1)_\II$. Moreover, whenever we end with the case $(i+1)_\I$ or $(i+1)_\II$, or in the case $(i)_\III\rightarrow (i+1)_\III$, we must have as functions defined on $I_{i}$,
\beq\label{gi+1-close-to-gi}
\|g_{i+1}-g_i\|_{C^2}< C\l^{-\frac32r_{i-1}}<C\l^{-\frac32q_{N+i-2}}.
\eeq
So basically, the essential change only happens in cases $(i)_\I\rightarrow(i+1)_\III$ and $(i)_\II\rightarrow(i+1)_\III$, which are essential in the same situation. Indeed, for the case $(i)_\II\rightarrow (i+1)_\III$, $I_{i-1}$ will be splitted into two disjoint interval $I_{i,1}$ and $I_{i+1,2}$. And on each interval$I_{i,j}$, $g_i$ will be of type $\I$. One is of type $\I_-$ and the other $\I_+$. More concretely, for these two cases, we have the following.

Let $r_{i-1}\le |k|<q_{N+i-1}$ be the time such that $I_{i,2}\pm (I_{i,1}+k\a)\neq\varnothing$. Then we define
$$
g'_{i,1}=s_k-u_{r^-_{i}}:I_{i,1}\rightarrow\mathbb R\mathbb P^1,\ g'_{i,2}=s_{r^+_{i}-k}-u_k:I_{i,2}\rightarrow\mathbb R\mathbb P^1
$$
 which satisfy
\beq\label{i+1r-1}
\left\|g'_{i,j}-g_{i}\right\|_{C^2}<C\l^{-\frac 32r_{i-1}}<C\l^{-\frac32 q_{N+i-2}}.
\eeq
This implies that $g'_{i,j}$ is of the same type as $g_{i}: I_{i,j}\rightarrow\R\PP^1$. Then to get all the necessary estimate of the geometric properties of $g_{i+1}$, it is sufficient to take
\beq\label{g-i+1}
g_{i+1}(x,t)=\begin{cases}\tan^{-1}(l_k^2\tan [g'_{i,2}(x+k\a,t)])-\frac\pi2+g'_{i,1}(x,t),\ &x\in I_{i,1},\\ \tan^{-1}(l_k^2\tan [ g'_{i,1}(x-k\a,t)])-\frac\pi2+g'_{i,2}(x,t),\ &x\in I_{i,2}\end{cases}
\eeq
where $l_k>c\l^{0.9k}>c\l^{0.9r_{i-1}}\ge c\l^{0.9q_{N+i-2}}$. This yields type $\III$ functions, hence, case $(i+1)_\III$.
\end{remark}

\begin{remark}[Derivative estimates over paramter $t$]\label{r.t-estimate} In Theorem~\ref{t.iteration}, all the derivative estimates of $g_i(x,t)$ and $\|A_{r_i}(x,t)\|$ are for $x$. However, all the necessary technical lemmas in \cite{Wang-Zhang} (Lemma 2--5) can be directly applied to the derivative estimates over parameter $t$.

On the other hand, by Theorem~\ref{t.iteration}, for each $t\in J$, we have either $g_i(c_{i,j},t)=0$ and $\frac{\partial g_i}{\partial x}(c_{i,j},t)\neq 0$, or $\frac{\partial g_i}{\partial x}(c_{i,j},t)=0$ and $\frac{\partial^2 g_i}{\partial x^2}(c_{i,j},t)\neq 0$. Thus, $c_{i,j}(t)$ are functions in $t$ and vary continuously over $t$. Thus if for any fixed $t_0$, $g_i(\cdot,t_0)$ is of type $a$, $a=\I,\II,\III$, then for all $t$ sufficiently close to $t_0$, $g_i(\cdot,t)$ are also of type $a$.
\end{remark}

\section{Proof of Main Theorem}\label{s.proof-of-main-theorem}
In this section, based on the induction Theorem~\ref{t.iteration}, we are going to prove Theorem B. First note for each $i\ge 0$,
$$
D_i:=\{(x,t):x\in I_{i}(t),\ t\in J\}.
$$

Then we let $c^0_{i,j}(t)=c_{i,j}(t)$. Recall by (\ref{k-orbits}), for $0\neq |k|<q_{N+i-2}$, $c^k_{i,j}(t)$'s are the extra minimal points of $g_{i}(\cdot,t)$. And as minimal points of $g_i(\cdot,t)$, they essentially lie on the $k$-orbit of $c_{i,j}(t)$.

Recall for a connected interval $S\subset\T$ and constant $0<a\le1$, $aS$ is defined to be the subinterval of $S$ with the same center and whose length is $a|S|$.
\begin{lemma}\label{l.key}
For each $i\ge 1$, consider $g_i:D_{i-1}\rightarrow\R\PP^1$. Then we have the following estimates
\begin{align}
\label{devi-estimate-i.1} &\left|\frac{\partial g_i}{\partial x}(c_{i,j}(t),t)\right|<C,\ j=1,2;\\
\label{devi-esitmate-i.1.5}&\prod_{j=1,2}\frac{\partial g_i}{\partial x}(c_{i,j}(t),t)\le 0;\\
\label{devi-estimate-i.2} &\frac{\partial g_i}{\partial t}(x,t)>c,\ (x,t)\in D_{i-1}.
\end{align}
Here $\frac{\partial g_i}{\partial x}(c_{i,j}(t),t)=0$ if and only if $c_{i,j}(t)=c^k_{i,j'}(t)$ for some $|k|<q_{N+i-2}$. Here $j\neq j'\in\{1,2\}$. Moreover, if $c_{i,1}(t)=c^{-k}_{i,2}(t)$, then it holds that
\beq\label{devi-estimate-i.3}
\frac{\partial g_i}{\partial x}(c_{i,1}(t),t)=0\mbox{ and }\left|\frac{\partial^2 g_i}{\partial x^2}(c_{i,1}(t),t)\right|>c;
\eeq
If $c_{i,2}(t)=c^{k}_{i,1}(t)$, then we have
\beq\label{devi-estimate-i.4}
\frac{\partial g_i}{\partial x}(c_{i,2}(t),t)=0\mbox{ and }\left|\frac{\partial^2 g_i}{\partial x^2}(c_{i,2}(t),t)\right|>c.
\eeq

\end{lemma}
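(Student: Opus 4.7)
The plan is to prove all four assertions simultaneously by induction on $i$. The base case $i=1$ is a direct computation from $g_1(x,t)=\tan^{-1}[t-v(x)]$, which gives $\partial_t g_1=\frac{1}{1+(t-v(x))^2}$, bounded below by a positive constant since $t$ ranges in $J$ and $v$ is bounded, proving (\ref{devi-estimate-i.2}). The formula $\partial_x g_1=-\frac{v'(x)}{1+(t-v(x))^2}$ is bounded by $C=\|v'\|_\infty$ and vanishes only at $z_1,z_2$. Depending on whether $t$ lies inside or outside the image of $v$, the minima of $|g_1|$ sit either at the two transverse zeros of $t-v(\cdot)$ (where $v'$ takes opposite signs since $v$ has exactly one min and one max) or at the extremals themselves; both cases yield (\ref{devi-esitmate-i.1.5}), and in the latter the second-derivative bound in (\ref{devi-estimate-i.3})--(\ref{devi-estimate-i.4}) follows from $v''(z_j)\ne 0$.

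For the inductive step, I would assume the conclusion at level $i$ and consider $g_{i+1}$ on $D_i$. By Theorem~\ref{t.iteration} three cases arise. In $(i+1)_\I$ and $(i+1)_\II$, inequality (\ref{gi+2-closeto-gi+1}) yields $\|g_{i+1}-g_i\|_{C^2}\le C\l^{-\frac32 r_i}$ on $I_i(t)$, and by Remark~\ref{r.t-estimate} the same closeness holds for $t$-derivatives. Combined with (\ref{ci-close-ci+1}) and the inductive hypothesis, all four conclusions transfer from $g_i$ to $g_{i+1}$ up to an error of order $\l^{-cq_{N+i-1}}$, which is negligible against the fixed constants $c,C$.

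The heart of the argument is the case $(i+1)_\III$, handled via the explicit formula (\ref{g-i+1}). For $x\in I_{i,1}$, writing $P(y)=\tan^{-1}(l_k^{2}\tan y)$ so that $P'(y)=\frac{l_k^{2}\sec^2 y}{1+l_k^{4}\tan^2 y}\ge 0$, differentiation gives
$$
\partial_\nu g_{i+1}(x,t)=P'\!\bigl(g'_{i,2}(x+k\a,t)\bigr)\,\partial_\nu g'_{i,2}(x+k\a,t)+\partial_\nu g'_{i,1}(x,t),\qquad \nu=x,t.
$$
For $\nu=t$, the inductive hypothesis applied to $g'_{i,j}$ (which are $C^2$-close to $g_i|_{I_{i,j}}$ by (\ref{i+1r-1})) supplies $\partial_t g'_{i,j}>c$, and since $P'\ge 0$ the sum stays $>c$, giving (\ref{devi-estimate-i.2}). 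For $\nu=x$ at the critical point $c_{i+1,1}(t)$, identified with the point $x_2$ of Lemma~\ref{l.s-deri-resonance}, I split on the bifurcation parameter $d$. In the non-resonant regime $d\ge r/3$, the argument $y=g'_{i,2}(c_{i+1,1}+k\a)$ stays bounded away from $0$ and $\pi/2$, so $P'(y)=O(l_k^{-2})$; hence $\partial_x g_{i+1}(c_{i+1,1})=\partial_x g'_{i,1}(c_{i+1,1})+O(l_k^{-2})$, and the super-exponential smallness of $l_k^{-2}$ against the type $\I$ lower bound $|\partial_x g'_{i,1}|>r^2$ near its zero forces both the bound $<C$ and the correct sign, giving (\ref{devi-estimate-i.1}); the symmetric calculation on $I_{i,2}$ carries the opposite sign because the two halves carry types $\I_\pm$ in opposite patterns, yielding (\ref{devi-esitmate-i.1.5}). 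In the resonant regime $d<r/3$, the tangential case $x_1=x_2$ of Lemma~\ref{l.s-deri-resonance} applies, and (\ref{type3-nondegeneracy}) directly delivers $\partial_x g_{i+1}(c_{i+1,1})=0$ with $|\partial_x^2 g_{i+1}(c_{i+1,1})|>c$; the orbit coincidence $c_{i+1,1}=c^{-k}_{i+1,2}$ is precisely what (\ref{i+2-orbit-critical-points}) and (\ref{k-orbits}) encode, closing the biconditional.

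The main obstacle will be the bookkeeping in the type $\III$ step, in particular identifying which of the points $x_1,\dots,x_4$ from Lemma~\ref{l.s-deri-resonance} plays the role of $c_{i+1,j}$ versus $c^{\pm k}_{i+1,j'}$, and tracking which subinterval carries type $\I_+$ versus $\I_-$ so that the product inequality (\ref{devi-esitmate-i.1.5}) propagates with the correct sign across the decomposition $I_i=I_{i,1}\cup I_{i,2}$.
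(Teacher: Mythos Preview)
Your overall architecture matches the paper's: induction on $i$, base case from the explicit formula for $g_1$, transfer via $C^2$-closeness in the type $\I$/$\II$ transitions, and the explicit formula~(\ref{g-i+1}) for the type $\III$ transition. Your derivation of (\ref{devi-estimate-i.2}) in the type $\III$ case --- writing $\partial_t g_{i+1}=P'(g'_{i,2})\,\partial_t g'_{i,2}+\partial_t g'_{i,1}$ and using $P'\ge 0$ --- is exactly the paper's argument.

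There is, however, a genuine gap in your handling of the $x$-derivative in the type $\III$ step. You split on $d\ge r/3$ versus $d<r/3$ and identify the latter with the tangential situation $x_1=x_2$. These are different conditions: $d<r/3$ is the regime in which (\ref{type3-nondegeneracy}) applies, while $x_1=x_2$ is equivalent (by (\ref{type3-zero-1})) to $f$ having no zero. When $d<r/3$ but $f$ still has two distinct zeros $x_1<x_2$, your argument breaks down: the critical point $c_{i+1,1}=x_2$ satisfies $|x_2-d|<Cl_k^{-3/4}$, so $y=f_1(x_2)\approx f_1(d)$ can be as small as $cd\cdot r^2$, and $P'(y)$ is then \emph{not} $O(l_k^{-2})$ but can be huge. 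Thus $\partial_x g_{i+1}(x_2)=P'(y)\,\partial_x f_1(x_2)+\partial_x f_2(x_2)$ is a priori unbounded by your estimate, and (\ref{type3-nondegeneracy}) only controls the second derivative, not the first.

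The paper closes this gap with a sign observation you have all the ingredients for but do not use. Since $f_1$ and $f_2$ are of opposite types $\I_\pm$, $\partial_x f_1$ and $\partial_x f_2$ have opposite signs on $I_{i,1}$; combined with $P'\ge 0$, the two summands in $\partial_x g_{i+1}$ have opposite signs. The paper then restricts to the subinterval $I^{(1)}_{i,1}$ bounded by the zero $z_{i+1,1}$ of $\partial_x g_{i+1}$ lying between $x_1$ and $x_2$; on that subinterval (which contains $c_{i+1,1}=x_2$) one has $0\le \partial_x g_{i+1}\le \partial_x g'_{i,1}<C$, giving (\ref{devi-estimate-i.1}) and (\ref{devi-esitmate-i.1.5}) without any smallness of $P'$. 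This is why the paper carries a slightly strengthened inductive hypothesis on half-intervals rather than only at the critical points: it makes the sign bound $0<\partial_x g'_{i,1}<C$ available on all of $I_{i,1}$, which is what the inequality $\partial_x g_{i+1}\le \partial_x g'_{i,1}$ needs.
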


\begin{proof}
(\ref{devi-estimate-i.3}) and (\ref{devi-estimate-i.4}) follows directly from either the definition of type $\II$ function or definition of type $\III$ function and (\ref{type3-nondegeneracy}). Thus, we focus on (\ref{devi-estimate-i.1})--(\ref{devi-estimate-i.2}). We are going to proceed by some sort of induction. For $i=1$, since
$$
g_1(x,t)=s_1(x,t)-u_1(x,t)=\tan^{-1}[t-v(x)]:\R/\Z\times J\rightarrow\R\PP^1,
$$
it's clearly that for all $(x,t)\in \R/\Z\times J=D_0$,
\begin{align}
\label{devi-estimate-1.1} &\frac{\partial g_1}{\partial t}(x,t)>c\\
\label{devi-estimate-1.2} &\left|\frac{\partial g_1}{\partial x}(x,t)\right|<C.
\end{align}

Now we consider $g_{i}: D_{i-1}\rightarrow\R\PP^1$ with $i\ge 2$. Then for each $t\in J$, $g_{i}(\cdot,t):I_{i-1}(t)\rightarrow\R\PP^1$ falls into one of the cases $(i)_{\I}$, $(i)_\II$ or $(i)_\III$. Throughout this proof, we may fix a $t_0$ and consider a sufficiently small interval $J_0$ around $t_0$. Then by Remark~\ref{r.t-estimate}, we may assume that for all $t\in J_0 $, $g_i(\cdot,t): I_{i-1}(t)\rightarrow\R\PP^1$ are of the same type as $g_{t_0}$. Now we do the following induction. For simplicity, we define for $0<a\le 1$
$$
aD'_{i,j}=\{(x,t):x\in aI_{i,j}(t), t\in J_0\},\ aD'_{i}=aD'_{i,1}\bigcup aD'_{i,2}
$$

\begin{itemize}
\item If $g_i(\cdot,t_0)$ is of type $(i)_\I$, then we assume the following.
      \begin{align}
       \label{devi-estimate-i.5} &\left|\frac{\partial g_i}{\partial x}(x,t)\right|<C,\ \forall (x,t)\in\frac{D'_{i-1}}{2};\\ \label{devi-esitmate-i.5.5}&\prod_{j=1,2}\frac{\partial g_i}{\partial x}(x^j,t)<0,\ \forall (x^j,t)\in\frac{D'_{i-1,j}}{2}\\
       \label{devi-estimate-i.6} &\frac{\partial g_i}{\partial t}(x,t)>c,\ \forall (x,t)\in D'_{i-1}.
      \end{align}
\item If $g_i(x,t_0)$ is of type $(i)_\II$, then we assume as function on $D'_{i-1}$,
      \beq\label{devi-estimate-i.7}
      \left\|g_i-g_{i-1}\right\|_{C^2}<C\l^{-\frac32q_{N+i-3}}.
      \eeq
      Here we set $q_{N-1}=1$.
\item If $g_i(x,t_0)$ if of type $(i)_\III$, then there exist a $1\le |k|\le q_{N+i-2}$ such that
      $$
      |c_{i-1,1}-c_{i-1,2}-k\a|<\frac{|I_{i-1,1}|}{2}.
      $$
      Let $d_{i-1}=c_{i-1,1}-c_{i-1,2}-k\a$. Then if $d_{i-1}>\frac{|I_{i-1,1}|}{4}$, we assume (\ref{devi-estimate-i.5}) and (\ref{devi-estimate-i.6}) hold. If $d_{i-1}\le \frac{|I_{i-1,1}|}{4}$, then we do the following. Let $z_{i,j}(t)\in I_{i-1,j}(t)$ be one of the zeros of $\frac{\partial g_i}{\partial x}(\cdot,t)$ on $I_{i-1,j}(t)$ that sits between $c_{i,j}(t)$ and $c^{\pm k}_{i,j'}(t)$ where $j\neq j'\in\{1,2\}$. So we may let
      $$
      \hskip 1cm I_{i-1,j}(t)=I^{(1)}_{i-1,j}(t)\bigcup I^{(2)}_{i-1,j}(t)\mbox{ with }I^{(1)}_{i-1,j}(t)\bigcap I^{(2)}_{i-1,j}(t)=\{z_{i,j}(t)\}.
      $$
      Let $I^{(1)}_{i-1,j}(t)$ be the one containing $c_{i,j}(t)$ and $I^{(1)}_{i-1}(t)=\bigcup_{j=1,2}I^{(1)}_{i-1,j}(t)$. Then we assume the following.
      \begin{align}
       \label{devi-estimate-i.8} &\left|\frac{\partial g_i}{\partial x}(x,t)\right|<C,\ \forall t\in J_0,\ \forall x\in I^{(1)}_{i-1}(t);\\ \label{devi-estimate-i.8.5}&\prod_{j=1,2}\frac{\partial g_i}{\partial x}(x_j,t)\le 0,\forall t\in J_0,\ x_j\in I^{(1)}_{i-1,j}(t);\\
       \label{devi-estimate-i.9} &\frac{\partial g_i}{\partial t}(x,t)>c,\ \forall (x,t)\in D'_{i-1}.
      \end{align}
\end{itemize}

Given (\ref{devi-estimate-i.5})--(\ref{devi-estimate-i.9}), we clearly have (\ref{devi-estimate-i.1}) and (\ref{devi-estimate-i.2}). Indeed, (\ref{devi-estimate-i.5})--(\ref{devi-estimate-i.6}) of type $\I$ and (\ref{devi-estimate-i.8})--(\ref{devi-estimate-i.9}) of type $\III$ directly imply (\ref{devi-estimate-i.1})--(\ref{devi-estimate-i.2}). In case of type $\II$, by Remark~\ref{r.iteration}, it holds that
$$
\|g_i-g_1\|_{C^2}<C\sum_{j=1}^{i-1}\|g_{i+1}-g_{i}\|<C\sum_{j=1}^{i-1}C\l^{-\frac32 q_{N+j-2}}<C\l^{-\frac32},
$$
which together with (\ref{devi-estimate-1.1}) and (\ref{devi-estimate-1.2}) clearly imply (\ref{devi-estimate-i.1})--(\ref{devi-estimate-i.2}).

\vskip 0.2cm
Now we move the induction to the step-$(i+1)$. Consider $g_i: D'_i\rightarrow\R\PP^1$ which clearly satisfies all the assumption of $g_{i}:D'_{i-1}\rightarrow\R\PP^1$. Then by Remark~\ref{r.iteration}, if we are in one of the cases
$$
(i)_\I,\ (i)_\II,\ (i)_\III\rightarrow (i+1)_\I,\ (i)_\II\rightarrow (i+1)_\II, \mbox{ or } (i)_\III\rightarrow (i+1)_\III
$$
then it holds that as functions on $D'_i$,
$$
\|g_{i+1}-g_{i}\|_{C^2}<C\l^{-\frac 32 q_{N+i-2}}.
$$
Then all the induction assumptions for step-$(i+1)$ naturally follows from those of step-$(i)$.

Thus we focus on the case $(i)_\I, (i)_\II\rightarrow(i+1)_\III$. Again by Remark~\ref{r.iteration}, the cases $(i)_\I, (i)_\II\rightarrow (i+1)_\III$ can be done in the same way, so we consider $(i)_\I\rightarrow(i+1)_\III$. Then there exists a \emph{resonance time} $r_{i-1}\le |k|\le q_{N+i-1}-1$ such that
$$
d_i:=|c_{i,1}-c_{i,2}-k\a|<\frac{|I_{i,1}|}{2}.
$$

Then we define $g'_{i,1}=s_k-u_{r^-_{i}}:D'_{i,1}\rightarrow\mathbb R\mathbb P^1$ and $g'_{i,2}=s_{r^+_{i}-k}-u_k:D'_{i,2}\rightarrow\mathbb R\mathbb P^1$ which satisfy
\beq\label{i+1r-1}
\left\|g'_{i,j}-g_{i}\right\|_{C^2}<C\l^{-\frac 32r_{i-1}}<C\l^{-\frac32 q_{N+i-2}}.
\eeq
This implies that $g'_{i,j}$ satisfies all the assumptions of $g_{i}: D'_{i,j}\rightarrow\R\PP^1$. Then to estimate the geometric properties of $g_{i+1}$, it is sufficient to take
\beq\label{g-i+1}
g_{i+1}(x,t)=\begin{cases}\tan^{-1}(l_k^2\tan [g'_{i,2}(x+k\a,t)])-\frac\pi2+g'_{i,1}(x,t),\ &x\in D'_{i,1},\\ \tan^{-1}(l_k^2\tan [ g'_{i,1}(x-k\a,t)])-\frac\pi2+g'_{i,2}(x,t),\ &x\in D'_{i,2}\end{cases}
\eeq
where $l_k>c\l^{0.9k}>c\l^{0.9r_{i-1}}\ge c\l^{0.9q_{N+i-2}}$.

Now if $d_i>\frac{I_{i,1}}{4}$, by the proof of \cite[Lemma 6]{Wang-Zhang}, or directly from (\ref{g-i+1}), it's straightforward calculation to see that
$$
\left\|g_{i+1}-g'_{i,j}\right\|_{C^2}<Cl_k^{-\frac32}<C\l^{-\frac32q_{N+i-2}}.
$$
Thus we get as function on $\frac{D'_{i,j}}{2}$,
$$
\left\|g_{i+1}-g_{i}\right\|_{C^2}<\left\|g_{i+1}-g'_{i,j}\right\|_{C^2}+\left\|g'_{i,j}-g_{i}\right\|_{C^2}<C\l^{-\frac32q_{N+i-2}},
$$
which together with step-$(i)$ induction assumption in case of $(i)_\I$ clearly implies the corresponding step-$(i+1)$ induction assumption.

If $d<\frac{|I_{i,1}|}{4}$, we focus on the interval $I_{i,1}(t)$ since then the case on $I_{i,2}(t)$ can be done in the same way. By (\ref{i+1r-1}) and our definition of type $\I$ function, we clear get that on $D'_{i,1}$,
\beq\label{i+1r-1.1}
0<\frac{\partial g'_{i,1}(x,t)}{\partial x}<C,\ -C<\frac{\partial g'_{i,2}(x+k\a,t)}{\partial x}<0,\ \frac{\partial g'_{i,j}(x,t)}{\partial t}>c.
\eeq

Thus, we may let $z_{i+1,1}(t)\in I_{i,1}(t)$ be one of the zeros of $\frac{\partial g_{i+1}(\cdot,t)}{\partial x}$ on $I_{i,1}(t)$ that sits between $c_{i+1,1}(t)$ and $c^{-k}_{i+1,2}(t)$. And set
      $$
      I_{i,1}=I^{(1)}_{i,1}(t)\bigcup I^{(2)}_{i,1}(t)\mbox{ with }I^{(1)}_{i,1}(t)\bigcap I^{(2)}_{i,1}(t)=\{z_{i+1,1}(t)\}.
      $$
Let $I^{(1)}_{i,1}(t)$ be the one always containing $c_{i+1,1}(t)$.

Then (\ref{i+1r-1}) and (\ref{i+1r-1.1}) together clearly implies that on $\{(x,t)\in I^{(1)}_{i,1}(t),\ t\in J_0\}$,
\beq
0\le \frac{\partial g_{i+1}(x,t)}{\partial x}<\frac{\partial g'_{i,1}(x,t)}{\partial x}<C,\ \frac{\partial g_{i+1}(x,t)}{\partial t}>\frac{\partial g'_{i,1}(x,t)}{\partial t}>c,
\eeq
which clearly implies our induction assumption at step-$(i+1)$.
\end{proof}

Now we define the function $\rho_{i}: J\rightarrow\R\PP^1=\R/(\pi\Z)$ such that
\beq\label{dist-cri-pts}
\rho_{i}(t)=\begin{cases}g_i(c_{i,1}(t),t), &\mbox{ if }c_{i,1}(t)=c^k_{i,2}(t)\mbox{ for some }k\in\Z.\\ c_{i,1}(t)-c_{i,2}(t), &\mbox{ otherwise.}\end{cases}
\eeq

Define $\mathcal Z_i=\{t: \rho_i(t)=0\}$. By the definition (\ref{dist-cri-pts}) of $\rho_i$, we clearly have
\beq\label{zero-of-dist-fun}
 \CZ_i=\{t:g_i(c_{i,1}(t),t)=\frac{\partial g_i}{\partial x}(c_{i,1}(t),t)=0\}.
\eeq

By (\ref{devi-estimate-i.3}) and (\ref{devi-estimate-i.4}), for each $t\in\CZ_i$, $g_i(c_{i,1}(t),t)$ is the local extremal. For each $t$, let $h$ be the critical point of $g_i(x,t)$. We know that $\frac{d^2g_i}{dx^2}(h)\neq 0$. Thus, by Implicit Function Theorem, $h$ is a function of $t$. We may assume $h(t)$ is such that $h(t)=c_{i,1}(t)$ for $t\in\CZ_i$. Consider the function $e(t)=g(h(t),t)$. Then we get
\beq\label{local-extremal-devi}
\frac{de(t)}{dt}=\frac{\partial g_i}{\partial x}(h(t),t)\cdot\frac{dh}{dt}(t)+\frac{\partial g_i}{\partial t}(h(t),t)=\frac{\partial g_i}{\partial t}(h(t),t)>0,
\eeq
which clearly implies that $\CZ_i$ is an isolated set. By choosing $J$ compact, we clearly get that the finiteness of $\CZ_i$. Then we have the following corollary of Lemma~\ref{l.key}.

\begin{corollary}\label{c.key}
For the $\rho_{i}$ above, it holds that
\beq
\left|\frac{d\rho_i}{dt}(t)\right|>c,\ \forall t\notin\CZ_i
\eeq
\end{corollary}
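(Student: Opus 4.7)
The plan is to split on the piecewise definition (\ref{dist-cri-pts}) of $\rho_i$ and handle each branch by implicit differentiation. First, for any $t_0 \notin \CZ_i$, I claim that exactly one of the two branches holds on a neighborhood of $t_0$. A transition between the branches happens precisely when $c_{i,1}(t)$ bifurcates (via the $d = d_0$ tangential case of Lemma~\ref{l.s-deri-resonance}) from a regular zero into an extra critical point, at which moment both $g_i(c_{i,1},t)=0$ and $\frac{\partial g_i}{\partial x}(c_{i,1},t)=0$ hold, placing $t$ in $\CZ_i$ by the characterization (\ref{zero-of-dist-fun}).

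In the \emph{resonance} branch $\rho_i(t) = g_i(c_{i,1}(t),t)$ (where $c_{i,1}(t) = c^k_{i,2}(t)$), the critical point is non-degenerate by (\ref{devi-estimate-i.3}) or (\ref{devi-estimate-i.4}), so the implicit function theorem applied to $\frac{\partial g_i}{\partial x}(c_{i,1}(t),t)=0$ makes $c_{i,1}(\cdot)$ a $C^1$ function of $t$. The chain-rule computation already carried out in (\ref{local-extremal-devi}) then directly yields $\frac{d\rho_i}{dt} = \frac{\partial g_i}{\partial t}(c_{i,1}(t),t) > c$ via (\ref{devi-estimate-i.2}).

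In the \emph{non-resonance} branch $\rho_i(t) = c_{i,1}(t) - c_{i,2}(t)$, I first observe that both $c_{i,j}(t)$ are honest zeros of $g_i(\cdot,t)$ with nonzero $x$-derivative. Using the orbit identifications in (\ref{k-orbits})--(\ref{i+2-orbit-critical-points}), the condition $c_{i,1}(t) = c^k_{i,2}(t)$ is equivalent to $c_{i,2}(t) = c^{-k}_{i,1}(t)$; so in this branch $\frac{\partial g_i}{\partial x}(c_{i,j},t) \neq 0$ for both $j = 1,2$, and since $c_{i,j}$ is an interior minimizer of $|g_i|$, this forces $g_i(c_{i,j}(t),t) = 0$. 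Implicit differentiation of these zero relations yields
$$
\frac{d\rho_i}{dt} \;=\; \frac{dc_{i,1}}{dt} - \frac{dc_{i,2}}{dt} \;=\; -\frac{\frac{\partial g_i}{\partial t}(c_{i,1},t)}{\frac{\partial g_i}{\partial x}(c_{i,1},t)} + \frac{\frac{\partial g_i}{\partial t}(c_{i,2},t)}{\frac{\partial g_i}{\partial x}(c_{i,2},t)}.
$$
By (\ref{devi-esitmate-i.1.5}) the two $x$-derivatives have strictly opposite signs, while by (\ref{devi-estimate-i.2}) the two $t$-derivatives are both $>c$; thus the two ratios have opposite signs, and subtracting them makes their absolute values add. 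Combined with the upper bound $\left|\frac{\partial g_i}{\partial x}\right| < C$ from (\ref{devi-estimate-i.1}), this produces $\left|\frac{d\rho_i}{dt}\right| \geq 2c/C$.

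The main obstacle I anticipate is the first step, namely cleanly justifying that $\CZ_i$ absorbs every transition between the two branches of the definition of $\rho_i$. Once that bookkeeping is in place, the two calculations above are purely local and routine, and the universal lower bound in the corollary is just the minimum of the constants produced in the two branches.
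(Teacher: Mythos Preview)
Your proposal is correct and follows essentially the same approach as the paper: split on the two branches of the definition of $\rho_i$, use implicit differentiation in each, and invoke the estimates (\ref{devi-estimate-i.1})--(\ref{devi-estimate-i.2}) and the opposite-sign condition (\ref{devi-esitmate-i.1.5}) from Lemma~\ref{l.key}. The paper's proof is terser---it does not explicitly discuss the branch-transition bookkeeping you flag as a concern---but the two arguments are otherwise line-for-line the same.
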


\begin{proof}
First consider the case $\rho_i(t)=c_{i,1}(t)-c_{i,2}(t)$. Then by Lemma~\ref{l.key}, we have that
$$
\left|\frac{dc_{i,j}(t)}{dt}\right|=\left|\frac{\partial g_i(c_{i,1}(t),t)}{\partial t}/\frac{\partial g_i(c_{i,1}(t),t)}{\partial x}\right|>c
$$
and
$$
\frac{dc_{i,j}(t)}{dt}\cdot\frac{dc_{i,j}(t)}{dt}<0.
$$
Thus, we must have
$$
\left|\frac{d\rho_i(t)}{dt}\right|=\left|\frac{dc_{i,1}(t)}{dt}-\frac{dc_{i,2}(t)}{dt}\right|>c.
$$

If $\rho_i(t)=g_{c_{i,1}(t),t}$, then by definition, $\rho_i(t)$ is the local extremal of $g_i(x,t)$ for each $t$. Thus similar to (\ref{local-extremal-devi}), we get

\begin{align*}
\left|\frac{d\rho_i(t)}{dt}\right| &=\left|\frac{\partial g_i}{\partial x}(c_{i,1}(t),t)\cdot\frac{dc_{i,1}}{dt}(t)+\frac{\partial g_i}{\partial t}(c_{i,1}(t),t)\right|\\ &=\left|\frac{\partial g_i}{\partial t}(c_{i,1}(t),t)\right|\\ &>c.
\end{align*}
\end{proof}

Next, we have the following Lemma.

\begin{lemma}\label{l.large-gap}
There exist a $n\in \Z_+$ such that for some $t\in J$,
\beq\label{large-gap}
\left|\rho_n(t)\right|=\left|g_n(c_{n,1}(t),t)\right|>\l^{-\frac{1}{10}r_{n-1}}.
\eeq
\end{lemma}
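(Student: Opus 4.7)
The plan is to exhibit an explicit parameter $t_\star\in J$ at which $|\rho_n(t_\star)|$ is bounded below by a positive constant $c_1$ depending only on $v$ and $\lambda_0$, independent of $n$. Since $r_{n-1}\to\infty$, the inequality $\lambda^{-r_{n-1}/10}<c_1$ then holds for every sufficiently large $n$, yielding the conclusion. Concretely, I take $t_\star:=\sup v+\lambda_0^{-1}\in J$. At this parameter, $g_1(x,t_\star)=\tan^{-1}(t_\star-v(x))$ is bounded below on all of $\R/\Z$ by $c_0:=\tan^{-1}(\lambda_0^{-1})>0$ (and strictly bounded above away from $\pi/2$), and the two critical points of $g_1(\cdot,t_\star)$ are exactly $c_{1,1}(t_\star)=z_2$ and $c_{1,2}(t_\star)=z_1$, separated by the fixed positive constant $|z_2-z_1|$.

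I then track the two quantities $V_n:=g_n(c_{n,1}(t_\star),t_\star)$ and $S_n:=c_{n,1}(t_\star)-c_{n,2}(t_\star)$ through the induction of Theorem~\ref{t.iteration}, aiming to show that $V_n\ge c_0/2$ and $|S_n|\ge|z_2-z_1|/2$ for every $n\ge 1$. In all transitions except the resonance-creating one $(n-1)_\I$ or $(n-1)_\II\to(n)_\III$, the estimates $\|g_n-g_{n-1}\|_{C^2}\le C\lambda^{-3r_{n-2}/2}$ from (\ref{gi+2-closeto-gi+1}) and $|c_{n,j}-c_{n-1,j}|\le C\lambda^{-3r_{n-2}/4}$ from (\ref{ci-close-ci+1}) suffice by telescoping. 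In the delicate resonance-transition case, I work directly with the explicit formula (\ref{g-i+1}) for $g_n$: at $t_\star$ the inputs $g'_{n-1,j}$ stay in a fixed interval bounded strictly between $0$ and $\pi/2$, so $\tan g'_{n-1,2}$ is of order $O(1)$, and after amplification by $l_k^2$ and composition with $\tan^{-1}$ the first summand of (\ref{g-i+1}) equals $-\pi/2+O(l_k^{-2})$; this correction is negligible against the second summand $g'_{n-1,1}(x,t_\star)$, which stays close to $g_1(x,t_\star)\ge c_0$. Hence $V_n\ge c_0/2$ persists across the transition. An analogous tracking of critical points, using that each $c_{n,j}(t_\star)$ lies in a tiny ball around $c_{n-1,j}(t_\star)$, preserves $|S_n|\ge|z_2-z_1|/2$.

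Combining these bounds with the piecewise definition (\ref{dist-cri-pts}) of $\rho_n$ as either $V_n$ or $S_n$ yields $|\rho_n(t_\star)|\ge c_1:=\tfrac12\min\{c_0,|z_2-z_1|\}>0$ for all $n\ge 1$. Choosing any $n$ large enough that $\lambda^{-r_{n-1}/10}<c_1$ completes the proof. The principal obstacle is the bookkeeping in the resonance transition: one must verify that composing $\tan^{-1}$ with an $l_k^2$-amplification of an input bounded away from $0$ still produces a $g_n$ whose minimum remains of the same order as that of $g_{n-1}$. The choice of $t_\star$ strictly outside the range $[\inf v,\sup v]$ is essential here, since it ensures $g_1(\cdot,t_\star)$ avoids both $0$ and $\pi/2$, which is precisely the hypothesis the $\tan^{-1}$-composition computation requires.
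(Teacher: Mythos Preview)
Your argument has a fundamental gap: you have chosen a single specific parameter $t_\star=\sup v+\lambda_0^{-1}$, but the lemma must hold for \emph{every} nondegenerate subinterval $J$ of parameters, not just the full range $[\inf v-2/\lambda_0,\sup v+2/\lambda_0]$. This is exactly how the lemma is used in the proof of Theorem~B: given an \emph{arbitrary} interval $S$ of energies, one needs a $t_0$ in the corresponding (possibly tiny) parameter interval with the stated gap. Your $t_\star$ lies at the very edge of the spectral range, where $t_\star-v(x)\ge\lambda_0^{-1}>0$ for all $x$; at such a parameter the cocycle is essentially trivially uniformly hyperbolic, and nothing is gained toward the density of $\mathcal{UH}$ in energy. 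In short, you have reproved that the resolvent set is nonempty, not that it is dense.

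There is also a smaller confusion: you identify $c_{1,1}(t_\star)=z_2$ and $c_{1,2}(t_\star)=z_1$, but by the paper's definition $c_{1,j}(t)$ minimizes $|g_1(\cdot,t)|$ on $I_{0,j}=\R/\Z$. Since $g_1(\cdot,t_\star)>0$ everywhere with its unique minimum at $z_2$, one has $c_{1,1}(t_\star)=c_{1,2}(t_\star)=z_2$, so $S_1=0$ rather than $|z_2-z_1|$. The points $z_1,z_2$ are critical points of $g_1$ in the calculus sense, not the paper's ``critical points''. The paper's proof avoids any such special choice of $t$: it uses Corollary~\ref{c.key} (namely $|d\rho_i/dt|>c$) to show that as $t$ ranges over \emph{any} interval $J$, either $g_i(c_{i,1}(t),t)$ itself sweeps out a set of measure $\ge c|J|$, or the difference $c_{i,1}(t)-c_{i,2}(t)$ does; in the latter case density of $\{k\alpha\}$ forces a strong resonance for some $t\in J$, and the explicit formula~(\ref{g-i+1}) then produces the gap at step $i+1$. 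That variation-in-$t$ mechanism is the heart of the argument, and your proposal bypasses it entirely.
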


\begin{proof}
 We first consider the case that for some $i\ge 1$, $\CZ_i$ contains at least two points. Then, by Corollary~\ref{c.key}, there exists a subinterval $J'\subset J$ such that either
$$
\mathrm{Leb}\{g_i(c_{i,1}(t),t):t\in J'\}=\pi\mbox{ or }
$$
$$
\mathrm{Leb}\{c_{i,1}(t)-c_{i,2}(t):t\in J'\}=\pi.
$$
In the first case, we clearly get (\ref{large-gap}) for $n=i$ since $\l\gg1$. In the latter case, it's also clear that for some $r_{i-1}\le |k_i|<q_{N+i-1}$ and for some large $i$ that
$$
\{t:\left\|c_{i,1}(t)-c_{i,2}(t)-k_i\a\right\|_{\R/\Z}<C\l^{-\frac12r_{i-1}}\}\subset J'.
$$
This implies the occurrence of strongest resonance at the next step. More precisely, we have two functions $g'_{i,j}: I_{i,j}\rightarrow\R\PP^1$, $j=1,2$, satisfying
\begin{align}
\label{large-gap1} &\|g'_{i,j}-g_i\|_{C^2}<C\l^{-\frac32 r_{i-1}};\\
\label{large-gap2} &g_{i+1}(x,t)=\tan^{-1}(l_{k_i}^2\tan [g'_{i,2}(x+k_i\a,t)])-\frac\pi2+g'_{i,1}(x,t),
\end{align}
where $\l^{\frac34 r_{i-1}}<l_{k_i}<\l^{r_i^{\frac34}}$. Let $\bar c_{i,j}(t)$ be the zero of $g'_{i,j}(x,t)$. Then by (\ref{large-gap1}), it clearly holds that
$$
\left|\bar c_{i,j}(t)-\bar c_{i,j}(t)\right|<C\l^{-\frac32 r_{i-1}}.
$$
Hence, we get
$$
\{t:\left\|\bar c_{i,1}(t)-\bar c_{i,2}(t)-k_i\a\right\|_{\R/\Z}<C\l^{-\frac12r_{i-1}}\}\subset J'.
$$
In particular, there exists $t_0\in J'\subset J$ such that $\bar c_{i,1}(t_0)+k_i\a=\bar c_{i,2}(t_0)$. Together with (\ref{large-gap2}), it's easy to see that
$$
\min_{x\in I_{i}(t_0)}\left|g_{i+1}(x,t_0)\right|>cl^{-\frac32}_{k_i}>c\l^{-\frac32r_i^{\frac34}}>c\l^{-\frac1{10}r_i}.
$$
This clearly implies (\ref{large-gap}) for $n=i+1$.

\vskip 0.2cm
Next we assume that for all $i\ge 1$, $\CZ_i$ contains at most one point, set $\CZ_i=\{t_i\}$. Thus we may let
$$
\rho_i(t)=\begin{cases}g_i(c_{i,1}(t),t), &t\in J_1;\\ c_{i,1}(t)-c_{i,2}(t), &t\in J_2,\end{cases}
$$
where $J_1\cup J_2\cup\{t_i\}=J$. Then by Corollary~\ref{c.key}, we have that
$$
\mathrm{Leb}\{g_i(c_{i,1}(t),t):t\in J_1\}+\mathrm{Leb}\{c_{i,1}(t)-c_{i,2}(t):t\in J_2\}>c|J|
$$
Then either for some large $i$,
$$
\mathrm{Leb}\{g_i(c_{i,1}(t),t):t\in J_1\}>c|J|>c\l^{-\frac1{10}r_{i-1}}
$$
which is nothing other (\ref{large-gap}) for $n=i$. Or for all large $i$, we have
$$
\mathrm{Leb}\{c_{i,1}(t)-c_{i,2}(t):t\in J_2\}>c|J|.
$$
By (\ref{ci-close-ci+1}), we may choose large $M$ so that
$$
|c_{i,j}(t)-c_{i+1,j}(t)|\ll c|J|
$$
for all $i\ge M$. In other words, we may assume $\{c_{i,1}(t)-c_{i,2}(t):t\in J_2\}$ is a fixed interval on $\R\PP^1$ for large $i$. Thus, by density of the set $\{k\a, k\in \Z\}$ on $\R/\Z$, we must have that for some large $i$ and some $r_{i-1}\le k_i<q_{N+i-1}$,
$$
\{t:\left\|c_{i,1}(t)-c_{i,2}(t)-k_i\a\right\|_{\R/\Z}<C\l^{-\frac12r_{i-1}}\}\subset J_2.
$$
Then we may proceed as before and conclude (\ref{large-gap}) for $n=i+1$.
\end{proof}

Now we are ready to prove Theorem~B. Let's first state a proposition.
\begin{prop}\label{p.large-norm}
Let $\{B^{(k)}\}_{k\in\Z}\subset\mathrm{SL}(2,\R)$ be a bounded sequence. In other words, $\|B^{(k)}\|<C$ for all $k\in\Z$. Let $\beta=\min_{k\in\Z}\|B^{(k)}\|$. Assume the following holds.
$$
\gamma:=\inf_{k\in\Z}\left|\tan[s(B^{(k)})-u(B^{(k-1)})]\right|>\frac{2}{\beta-\beta^{-1}},
$$
Then there exists a $\rho>1$ such that for each $k\in Z$ and each $n\ge1$
$$
\|B^{(k+n-1)}\cdots B^{(k)}\|>\rho^n.
$$
Moreover if $\beta\gg\frac1\gamma\gg 1$, then $\rho>c\beta\gamma\gg 1$.
\end{prop}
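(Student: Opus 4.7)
The plan is to set up an invariant cone field around the unstable directions $u(B^{(k)})$ and combine cone invariance with a lower bound on norm expansion inside each cone. First I would use the polar decomposition (\ref{polar-form-single-matrix}) to write $B^{(k)} = R_{u_k} \cdot \mathrm{diag}(\beta_k, \beta_k^{-1}) \cdot R_{\pi/2 - s_k}$, where $s_k = s(B^{(k)})$, $u_k = u(B^{(k)})$, and $\beta_k = \|B^{(k)}\| \geq \beta$. A direct calculation then gives, for any unit vector $w$ whose direction makes angle $\phi$ with $s_k$, the two identities
$$
\|B^{(k)} w\|^2 = \beta_k^2 \sin^2\phi + \beta_k^{-2}\cos^2\phi, \qquad |\tan\angle(B^{(k)} w, u_k)| = \beta_k^{-2}|\cot\phi|.
$$
So $B^{(k)}$ attracts all directions toward $u_k$ and expands norms by at least $\beta|\sin\phi|$ provided the input is well separated from $s_k$.

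Next, for a parameter $\eta > 0$ to be optimized, define the cones $C_k = \{w \neq 0 : |\tan\angle(w, u_k)| \leq \eta\}$. The hypothesis $|\tan(s_k - u_{k-1})| \geq \gamma$ together with the addition formula for $\tan$ shows that every $w \in C_{k-1}$ makes angle $\phi$ with $s_k$ satisfying $|\tan\phi| \geq (\gamma - \eta)/(1 + \gamma\eta) = \gamma - O(\eta)$. Applying $B^{(k)}$ then yields $|\tan\angle(B^{(k)} w, u_k)| \lesssim \beta^{-2}/(\gamma - \eta)$, so the cone invariance $B^{(k)} C_{k-1} \subset C_k$ boils down to a quadratic of the form $\eta^2 - \gamma\eta + \beta^{-2} \leq 0$. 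The hypothesis $\gamma > 2/(\beta - \beta^{-1})$ implies $\gamma\beta > 2$, so the discriminant is positive and I pick $\eta = \frac{1}{2}(\gamma - \sqrt{\gamma^2 - 4\beta^{-2}})$, which satisfies $\gamma - \eta \geq \gamma/2$.

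Inside this invariant cone the norm expands by at least $\rho := \beta(\gamma-\eta)/\sqrt{1 + (\gamma-\eta)^2}$ per step. Since $\rho > 1$ is equivalent to $(\gamma-\eta)^2(\beta^2 - 1) > 1$, and $\gamma - \eta \geq \gamma/2$, it suffices to have $\gamma > 2/\sqrt{\beta^2 - 1}$, which is indeed implied by the sharp hypothesis $\gamma > 2\beta/(\beta^2-1)$ because $\sqrt{\beta^2-1} < \beta$ for $\beta > 1$. Iterating along any initial unit vector $w_0$ in the direction of $u_{k-1}$ (the center of $C_{k-1}$) then gives $\|B^{(k+n-1)} \cdots B^{(k)} w_0\| \geq \rho^n$, hence the operator-norm bound. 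In the asymptotic regime $\beta \gg 1/\gamma \gg 1$ one has $\gamma \ll 1$, the chosen $\eta \approx \beta^{-2}/\gamma$ is negligible relative to $\gamma$, and so $\rho \approx \beta\gamma/\sqrt{1 + \gamma^2} \approx \beta\gamma$, yielding $\rho > c\beta\gamma$ as claimed.

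The main obstacle I foresee is the sharp calibration of the cone parameter $\eta$: both cone invariance and $\rho > 1$ must be arranged with the \emph{same} $\eta$ under the stated threshold. A naive argument gives invariance only from $\gamma\beta > 2$ and expansion only from a strictly stronger lower bound on $\gamma$, and it is precisely the coupling of the quadratic $\eta^2 - \gamma\eta + \beta^{-2} \leq 0$ with the inequality $\gamma - \eta \geq \gamma/2$ that lets the whole argument close at exactly the hypothesis $\gamma > 2/(\beta - \beta^{-1})$. Everything else---the polar decomposition identities, the iteration, and the asymptotic estimates---is routine bookkeeping.
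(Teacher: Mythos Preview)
Your cone-field argument is correct and is the standard route to this kind of uniform expansion estimate. Note, however, that the paper does \emph{not} give its own proof of this proposition: it simply writes ``See e.g.\ \cite[Lemma 11]{zhang} for a proof'' and moves on. The argument in that reference is precisely an invariant-cone construction of the sort you outline---polar decomposition, a cone of width $\eta$ about the unstable direction, invariance from the angle hypothesis, and per-step expansion from $\|B^{(k)}w\|\ge\beta_k|\sin\phi|$. So your proposal matches the intended proof in substance.

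One small comment on calibration: when you pass from $|\tan\phi|\ge(\gamma-\eta)/(1+\gamma\eta)$ to the quadratic $\eta^2-\gamma\eta+\beta^{-2}\le0$ you are silently dropping the factor $(1+\gamma\eta)$. In the asymptotic regime $\beta\gg1/\gamma\gg1$ this is harmless since $\gamma\eta\ll1$, and your estimate $\rho>c\beta\gamma$ goes through exactly as you say. At the exact threshold $\gamma=2/(\beta-\beta^{-1})$ the bookkeeping is a little more delicate than your sketch suggests, but the same choice of $\eta$ (the smaller root) still works once one tracks the $(1+\gamma\eta)$ factor through; the point is that the hypothesis already has slack $\gamma\beta>2+\gamma\beta^{-1}$ rather than merely $\gamma\beta>2$. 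You correctly identified this as the only non-routine step.
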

See e.g. \cite[Lemma 11]{zhang} for a proof.
\begin{proof}[\textbf{Proof of Theorem B}]
For the arbitrary given interval $S\subset [\inf v-2, \sup v+2]$, by Lemma~\ref{l.large-gap}, there exists a $t_0\in J$ such that for some large $n\in\Z_+$,
$$
\min_{x\in I_{n-1,1}(t_0)}\left|g_{n}(x,t_0)\right|>c\l^{-\frac1{10}r_{n-1}}.
$$

By \cite[Theorem 3]{Wang-Zhang}, the above inequality implies that
\beq\label{proof-of-theorem1.1}
\min_{x\in I_{n-1}(t_0)}\left|g_{n}(x,t_0)\right|>c\l^{-\frac1{10}r_{n-1}}.
\eeq

On the other hand, we know from \cite[Theorem 3]{Wang-Zhang} that
\beq\label{proof-of-theorem1.2}
\|A_{r^+_{n-1}(x, t_0)}(x,{t_0})\|>\l^{\frac9{10}r^+_{n-1}(x, t_0)},\ \forall x\in I_{n-1}(t_0).
\eeq

By \emph{Diophantine} condition, there exists a $M_1$ polynomially large in $|I_{n-1}(t_0)|^{-1}$ such that for each $x\in\R/\Z$, $x+m\a\in I_{n-1}(t_0)$ for some $0\le m\le M_1$ (see e.g. \cite[Lemma 6]{aviladamanikzhang}).

Set $M_2=\max_{x\in I_{n-1}}[r^{\pm}_{n-1}(x, t_0)]^2$. Then for each $x\in I_{n-1}(t_0)$ and for each $M\ge M_2$,  let $1\le j_p\le M,\ 1\le p\le m$ be all the times such that
 $$
 j_{p}-j_{p-1}\ge q_{N+n-2},\ x+j_p\a\in I_{n-1}(t_0),
 $$
where we set $j_0=0$. Also it's clear that $M-j_m<M^{\frac12}$. Then we have
\begin{align*}
\|A_M(x, t_0)\|&=\left\|A_{M-j_m}(x+j_m\a, t_0)\cdot\prod^{m}_{p=1}A_{j_p-j_{p-1}}(x+j_{p-1}\a, t_0)\right\|\\
&\ge \left\|A_{M-j_m}(x+j_m\a, t_0)\right\|^{-1}\cdot\left(\prod^{m}_{p=1}\left\|A_{j_p-j_{p-1}}(x+j_{p-1}\a, t_0)\right\|\right)^{\frac35}\\
&\ge \l^{-M^{\frac12}}\cdot\l^{\frac12 j_m}\\
&\ge \l^{\frac12 M-2M^{\frac12}}\\
&\ge \l^{\frac13 M},
\end{align*}
where the second estimate follows from (\ref{proof-of-theorem1.1}), (\ref{proof-of-theorem1.2}) and Proposition~\ref{p.large-norm}. Indeed, it clearly holds for each $1\le p\le m$,
$$
1\gg \left|[s_{r_n^+(x+j_p\a, t_0)}-u_{r_n^-(x+j_p\a, t_0)}](x+j_p\a, t_0)\right|>c|g_{n}(x,t_0)|\gg \|A_{j_{p+1}-j_p}(x+j_p, t_0)\|^{-1}.
$$
Thus, Proposition~\ref{p.large-norm} can be applied to the finite sequence
$$
A_{j_m-j_{m-1}}(x+j_{m-1}\a, t_0),\ldots,A_{j_p-j_{p-1}}(x+j_{p-1}\a, t_0),\ldots,A_{j_1}(x, t_0)
$$
and yields the estimate above.

Now for any $M>\max\{M_1^2, M_2\}$ and any $x\in\R/\Z$, let $0\le j\le M_1$ be first time $x+j\a\in I_{n-1}(t_0)$. Then we clearly have
$$
\|A_M(x, t_0)\|>\|A_{j}(x, t_0)\|^{-1}\cdot\|A_{M-j}(x+j\a, t_0)\|\ge\l^{-M_1}\l^{\frac13(M-j)}>\l^{\frac14M},
$$
which clearly implies (\ref{ueg}) in our setting. This completes the proof of Theorem B, hence, Main Theorem A.

\end{proof}

\appendix

\section{Generalization of Theorem B}\label{s.generalization}

As discussed in \cite{Wang-Zhang}, Theorem~\ref{t.iteration} can be applied to any one parameter family of cocycle maps  $B\in C^2(\R/\Z\times T, \mathrm{SL}(2,\R))$ such that we could get started with the induction. Here $T\subset\R$ is any compact interval of parameters. In particular, consider
$$
B^{(t,\l)}=\L(x)\circ R_{\psi(x,t)}=\begin{pmatrix}\l(x)&0\\0&\l^{-1}(x)\end{pmatrix}\cdot\begin{pmatrix}\cos\psi(x,t)&-\sin\psi(x,t)\\
\sin\psi(x,t)&\cos\psi(x,t)\end{pmatrix}
$$
with $\psi(x,t)\in C^2(\R/\Z\times T,\R)$, $\l(x)\in C^2(\R/\Z,\R)$. Assume $\l(x)$ and $\psi(x,t)$ satisfying the following conditions:
\begin{itemize}
\item first, $\l(x)>\l$ and $\left|\frac{d^m\l(x)}{dx^m}\right|<C\l$ for each $x\in\R/\Z$ and $m=0,1,2$. For each $t$, $\psi(\R/\Z,t)\subset[0,\pi)$ in $\R\PP^1$.

\item secondly, For each $t\in T$, we have that the corresponding set
 $$
 C(t):=\{x:\psi(x,t)-\frac\pi2=\min_{y\in\R/\Z}\left[\psi(y,t)-\frac\pi2\right]\in\R\PP^1\}=\{c_{1}(t),c_{2}(t)\}
 $$
 with the possibility that $c_1(t)=c_2(t)$.

\item Finally, there exists a $r>0$ such that if we consider the interval
    $$
    I(t)=I_1(t)\cup I_2(t)\mbox{ with }I_{j}(t)=B(c_{j}(t),r),\ j=1,2.
    $$
    Then we assume the following.
\begin{itemize}
\item If $I_1(t)\cap I_2(t)=\varnothing$, then $\psi(\cdot,t)$ is of type $\I$ on $I_j(t)$, $j=1,2$. Moreover, if $\psi(\cdot,t)$ is of type $\I_-$ on $I_1(t)$ then it is of type $\I_+$ on $I_2(t)$, vice versa.
\item If $I_1(t)\cap I_2(t)\neq\varnothing$, then $\psi(\cdot,t)$ is of type $\II$ on $I(t)$.
\end{itemize}
\end{itemize}

Now to get the density of $\CU\CH$ as in Theorem B, we only need to further assume that for all $(x,t)\in\R/\Z\times T$,
$$
\left|\frac{\partial\psi}{\partial t}(x,t)\right|>c.
$$

Then we have the following Corollary.
\begin{corollary}\label{c.general}
For the given $B^{(t,\l)}$ as above, for each $\a\in DC_\tau$ with $\tau>2$, there exists a $\l_0=\l_0(\a,B)$ such that for all $\l>\l_0$,
$$
\{t:(\a,B^{(t,\l)})\in\CU\CH\}
$$
is open and dense in $T$.
\end{corollary}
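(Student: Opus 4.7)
The plan is to run the argument of Theorem~B essentially verbatim, noting that the Schr\"odinger-specific structure was used in only two places: to initialize the induction via Lemma~\ref{l.reduce-form}, and to supply the uniform lower bound on $\partial g_1/\partial t$ recorded in (\ref{devi-estimate-1.1}). Both of these are provided by the hypotheses on $\L(x)$ and $\psi(x,t)$, so no new ingredient is required.

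First I would check that the initial datum fits the framework of \cite{Wang-Zhang}. The cocycle $B^{(t,\l)}=\L(x)\cdot R_{\psi(x,t)}$ is already in the polar form (\ref{polar-decom}), and the growth and derivative assumptions on $\l(x)$ match (\ref{norm1-deri-control}). A direct computation gives $s_1(x,t)=\frac{\pi}{2}-\psi(x,t)$ and $u_1(x,t)=0$, so that
\[
g_1(x,t)=\frac{\pi}{2}-\psi(x,t).
\]
The structural assumptions on $\psi(\cdot,t)$---type $\I_{\pm}$ on each $I_j(t)$ with opposite signs when the two balls are disjoint, and type $\II$ on $I(t)$ when they overlap---are exactly the step-$1$ type conditions required by Theorem~\ref{t.iteration}. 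Openness of $\{t:(\a,B^{(t,\l)})\in\CU\CH\}$ is automatic from the $C^0$-openness of $\CU\CH$ and the continuous dependence of $B^{(t,\l)}$ on $t$, so only density needs proof.

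Next I would rerun Lemma~\ref{l.key}. The inductive scheme of Theorem~\ref{t.iteration} is insensitive to the choice of parameter, so the step-$i$ structural assumptions (\ref{devi-estimate-i.5})--(\ref{devi-estimate-i.9}) propagate exactly as in the original proof; the only substantive modification is at the base case, where (\ref{devi-estimate-1.1}) is replaced by the hypothesis $|\partial_t\psi|>c$, which immediately gives $|\partial_t g_1|>c$, and (\ref{devi-estimate-1.2}) is replaced by the $C^2$ bound on $\psi$. From there Corollary~\ref{c.key} carries over unchanged: the distance function $\rho_i$ defined in (\ref{dist-cri-pts}) satisfies $|d\rho_i/dt|>c$ away from the finite set $\CZ_i$. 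With these estimates in hand, the transversality mechanism of Lemma~\ref{l.large-gap} applies to any subinterval $S\subset T$: either $\CZ_i$ eventually contains two points and a strong resonance at step $i+1$ forces a large lower bound on $\min_x|g_{i+1}(x,t)|$ for some $t\in S$, or $\CZ_i$ is eventually a singleton and the equidistribution of $\{k\a\}$ together with (\ref{ci-close-ci+1}) again forces such a resonance. This produces some $t_0\in S$ and some $n$ with $\min_{x\in I_{n-1}(t_0)}|g_n(x,t_0)|>c\l^{-r_{n-1}/10}$. Combining this gap with the norm growth (\ref{norm-i+2}), Proposition~\ref{p.large-norm}, and the Diophantine-driven return-time argument then yields $(\a,B^{(t_0,\l)})\in\CU\CH$ exactly as in the final paragraphs of the proof of Theorem~B.

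The main obstacle, such as it is, lies in the initialization: one must be certain that the axioms imposed on $\psi$ really do deliver every piece of geometric input that \cite{Wang-Zhang} extracted from the explicit Schr\"odinger form $\tan^{-1}[t-v(x)]$. The hypotheses have been formulated precisely for this purpose, so the task reduces to checking that no hidden feature of the Schr\"odinger case---such as the monotone coincidence of $x$-critical points of $g_1$ across different values of $t$---was used beyond what the stated structural conditions on $\psi$ already supply.
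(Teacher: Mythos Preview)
Your proposal is correct and matches the paper's approach: the paper states Corollary~\ref{c.general} without a separate proof, relying on the preceding remark that Theorem~\ref{t.iteration} applies to any $C^2$ one-parameter family $B^{(t,\l)}=\L(x)R_{\psi(x,t)}$ once the listed structural hypotheses on $\l(x)$ and $\psi(x,t)$ hold, so that the proof of Theorem~B carries over verbatim. Your identification of the two Schr\"odinger-specific inputs (the polar-form initialization and the bound $|\partial_t g_1|>c$) and their replacement by the stated hypotheses is exactly the intended reduction.
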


Clearly, the density of $\CU\CH$ in Corollary~\ref{c.rotation_family}, and Corollary~\ref{c.local} are direct consequences of Corollary~\ref{c.general}. Now we apply Corollary~\ref{c.rotation_family} to the Szeg\H o cocycles which arise naturally in the study of orthogonal polynomial on the unit circle. For detailed introduction of orthogonal polynomial on the unit circle, see \cite{simon1} and \cite{simon2}.

The cocycle map $A^{(E,f)}:\R/\Z\rightarrow \mathrm{SU}(1,1)$ is given by
\beq\label{szego}
A^{(E,f)}(x)=(1-|f(x)|^2)^{-1/2}\begin{pmatrix}\sqrt E& \frac{-\overline{f(x)}}{\sqrt E}\\ f(x)\sqrt E&\frac{1}{\sqrt E} \end{pmatrix},
\eeq
where $E\in\partial\D$, $\D$ is the open unit disk in complex plane $\C$, and $f:\R/\Z\rightarrow\D$ is a measurable function satisfying
$$
\int_X\ln(1-|f|)d\mu>-\infty.
$$
$\mathrm{SU}(1, 1)$ is the subgroup of $\mathrm{SL}(2,\C)$ preserving the unit disk in $\C\PP^1=\C\cup\{\infty\}$ under M\"obius transformations. It is conjugate in $\mathrm{SL}(2,\C)$ to $\mathrm{SL}(2,\R)$ via
$$
Q=\frac{-1}{1+i}\begin{pmatrix}1& -i\\ 1& i \end{pmatrix}\in \mathbb U(2).
$$
In other words, $Q^*\mathrm{SU}(1,1)Q=\mathrm{SL}(2,\R)$. Now consider a function $\t\in C^2(\R/\Z,\R)$ such that $\t(\R/\Z)\subset [0,\frac12)$ and
for some \emph{Diophantine} $\a$, $\t(x)-\t(x-\a)$ is of the same type of function with $v$ in Theorem~A.

One easy example is that $\t(x)=\frac12\cos(x)$, of which $\t(x)-\t(x-\a)$ is of the same type of function with $v$ for all irrational $\a$. Then, we have the follow corollary of Corollary~\ref{c.rotation_family}.
\begin{corollary}\label{c.szego}
Let $f=\l e^{2\pi i[\t(x)+kx]}$, $0<\l<1$, $k\in\Z$ with $\t$ satisfying the above conditions. Let $\a$ be a Diophantine number such that $\t(x)-\t(x-\a)$ is of the same type of function with $v$ in Theorem~A. Then for each such Diophantine $\a$, there exists a $\l_0=\l_0(\t, \a)\in (0,1)$ such that for all $\l_0<\l<1$,
$$
\{E:(\a,A^{(E,f)})\in\CU\CH\}
$$
is open and dense in $\partial\D$.
\end{corollary}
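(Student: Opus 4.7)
The plan is to reduce Corollary~\ref{c.szego} to Corollary~\ref{c.general} by conjugating the Szeg\H o cocycle into the $\mathrm{SL}(2,\R)$ form of Section~\ref{s.generalization}. First I would use the fixed matrix $Q\in\mathbb{U}(2)$ to pass from $\mathrm{SU}(1,1)$ to $\mathrm{SL}(2,\R)$ by setting $\tilde A(x,E)=Q^{*}A^{(E,f)}(x)Q$. Conjugation by a fixed unitary preserves norm growth of iterates, so $(\a,A^{(E,f)})\in\CU\CH$ if and only if $(\a,\tilde A(\cdot,E))\in\CU\CH$, and this reduction costs nothing.

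Since $|f(x)|\equiv \l$, the prefactor $(1-\l^2)^{-1/2}$ together with the diagonal structure $\mathrm{diag}(\sqrt E,1/\sqrt E)$ produces, after conjugation by $Q$, a constant hyperbolic factor $\Lambda=\mathrm{diag}(\mu,\mu^{-1})$ with $\mu=\sqrt{(1+\l)/(1-\l)}$; as $\l\to 1^{-}$ the parameter $\mu\to\infty$, so $\mu$ plays the role of the large parameter $\l$ in Corollary~\ref{c.general}. Writing $E=e^{2\pi it}$ and computing carefully, one finds that $\tilde A(x,t)$ admits a polar-type presentation
$$
\tilde A(x,t)=R_{\xi_1(x,t)}\cdot\Lambda\cdot R_{\xi_2(x,t)},
$$
where $\xi_1,\xi_2\in C^2(\R/\Z\times\partial\D,\R)$ depend on $(x,t)$ only through the phase $\t(x)+kx$ and (affinely) through $t$.

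Next I would perform a gauge conjugation $B(x,t)=U(x+\a)^{-1}\tilde A(x,t)U(x)$ with $U(x)=R_{\eta(x)}$ chosen so that the left rotation $R_{\xi_1}$ is absorbed across consecutive fibres, producing
$$
B(x,t)=\Lambda\cdot R_{\psi(x,t)}.
$$
This is possible precisely because $\Lambda$ is a constant hyperbolic matrix; the cohomological equation for $\eta$ can be solved explicitly at the level of angles (it amounts to telescoping a linear-in-$x$ combination), and after cancellation the new angle $\psi(x,t)$ depends on $x$ through the combination $\t(x)-\t(x-\a)$ together with the constant shift $k\a$, rather than through $\t(x)+kx$ itself. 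This is exactly the reason the hypothesis of Corollary~\ref{c.szego} is phrased in terms of $\t(x)-\t(x-\a)$: the linear part $kx$ is a coboundary for the shift by $\a$ and disappears, while $\t(x)$ contributes the $\a$-difference.

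It remains to verify the hypotheses of Corollary~\ref{c.general} for the family $B(x,t)$. By assumption $\t(x)-\t(x-\a)$ has exactly two non-degenerate critical points (one minimum, one maximum), so $\psi(\cdot,t)$ inherits the same structure up to a $t$-dependent constant shift, hence is of type $\I$ or $\II$ on a neighbourhood of each extremum as required. The condition $\t(\R/\Z)\subset[0,\tfrac12)$ yields $\sup_x\psi(x,t)-\inf_x\psi(x,t)<\pi$. Finally, the explicit dependence of $\sqrt E=e^{\pi it}$ in the matrix $\tilde A$ makes $\psi$ affine in $t$ with slope uniformly bounded below, giving $|\partial_t\psi|>c$. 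Corollary~\ref{c.general} then provides a $\mu_0=\mu_0(\t,\a)$, equivalently a $\l_0=\l_0(\t,\a)\in(0,1)$, such that for $\l\in(\l_0,1)$ the set $\{E:(\a,B(\cdot,E))\in\CU\CH\}$ is open and dense in $\partial\D$, and the same conclusion transfers back to $A^{(E,f)}$ via the initial conjugation. The main obstacle I expect is bookkeeping the gauge change and verifying that the cancellation indeed produces the difference $\t(x)-\t(x-\a)$ (and not some competing phase), thereby justifying the precise form of the hypothesis on $\t$; once this is checked, the result is a direct application of Corollary~\ref{c.general}.
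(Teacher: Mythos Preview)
Your proposal is correct and follows essentially the same route as the paper's proof: conjugate by $Q$ to pass from $\mathrm{SU}(1,1)$ to $\mathrm{SL}(2,\R)$, perform a polar decomposition, and then a rotation-valued gauge change to absorb the left rotation, which is exactly what produces the difference $\t(x)-\t(x-\a)$ and the constant shift $k\a$ in the angle. The only cosmetic discrepancy is that the paper writes the result in the form $\Lambda\cdot R_{\psi(x)}\cdot R_{\pi t}$ and appeals to Corollary~\ref{c.rotation_family}, whereas you fold $R_{\pi t}$ into $\psi(x,t)$ and invoke Corollary~\ref{c.general}; since the two corollaries coincide for constant $\Lambda$ and $\psi$ affine in $t$, this is the same argument.
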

\begin{proof}
Transform $\mathrm{SU}(1,1)$ to $\mathrm{SL}(2,\R)$, set $E=e^{2\pi t}$ for $0\le t< 1$ and do the polar decomposition. We see that the cocycle map (\ref{szego}) can be transformed into the following form
$$
A^{(E,f)}=\begin{pmatrix}\sqrt{\frac{1+\l}{1-\l}}&0\\0&\sqrt{\frac{1-\l}{1+\l}}\end{pmatrix}\cdot R_{\psi(x)}\cdot R_{\pi t},
$$
where $\psi(x,t)=\pi[\theta(x)-\theta(x-\alpha)+k\alpha]$. This concludes the proof.
\end{proof}

\end{document}